\newcommand\F{{\mathbb F}}
\newcommand\Z{{\mathbb Z}}
\newcommand\N{{\mathbb N}}
\newcommand\lcm{{\mathrm{lcm}}}
\newcommand\Ann{{\mathrm{Ann}}}
\newcommand\cD{{\mathcal D}}
\newcommand\bh{\mathbf{h}}
\newcommand\bj{\mathbf{j}}
\newcommand\bk{\mathbf{k}}
\newcommand\bx{\mathbf{x}}
\newcommand\ba{\mathbf{a}}
\newcommand\bc{\mathbf{c}}
\newcommand\bmu{\pmb{\mu}}
\newcommand\bP{\bar{P}}
\newcommand\Lm{\textrm{Lm}}
\newtheorem{theorem}{Theorem}[section]
\newtheorem{lemma}[theorem]{Lemma}
\theoremstyle{definition}
\newtheorem{definition}[theorem]{Definition}
\newtheorem{remark}[theorem]{Remark}
\numberwithin{equation}{section}
\begin{document}

\title[]{Polynomial functions in the residue class rings of Dedekind domains}

\author{Xiumei Li}
\address{School of Mathematical Sciences, Qufu Normal University, Qufu, Shandong, 273165, China}
\email{lxiumei2013@hotmail.com}

\author{Min Sha}
\address{Department of Computing, Macquarie University, Sydney, NSW 2109, Australia}
\email{shamin2010@gmail.com}

\subjclass[2010]{11T06, 11T55}



\keywords{Polynomial function, Dedekind domain, residue class ring, finite field}

\begin{abstract}
In this paper, as an extension of the integer case, we define polynomial functions over the
residue class rings of Dedekind domains, and then we give canonical representations
and counting formulas for such polynomial functions.
In particular, we give an explicit formula for the number of polynomial functions over
the residue class rings of polynomials over finite fields.
\end{abstract}

\maketitle



\section{Introduction}	

\subsection{Motivation}

Let $m$ and $n$ be two positive integers.
In \cite{Chen1995} Chen has defined the concept of a polynomial function from $\Z/n\Z$ to $\Z/m\Z$
and has obtained an exact formula for the number of such polynomial functions,
which has been extended by Chen \cite{Chen1996} to
functions from $\Z/n_1\Z \times \cdots \times \Z/n_r\Z$ to $\Z/m\Z$.

\begin{definition}[Chen \cite{Chen1996}]
A function $f: \Z/n_1\Z \times \cdots \times \Z/n_r\Z \to \Z/m\Z$ is said to be a \textit{polynomial function}, if it is represented by a polynomial $F\in \Z[x_1,\ldots,x_r]$ such that
$$
f(a_1,\ldots,a_r) \equiv F(a_1,\ldots,a_r) \pmod{m}
$$
for $a_i=0,1,\ldots,n_i-1; i=1,2,\ldots,r$.
\end{definition}

However, it hasn't been proved in \cite{Chen1995,Chen1996} that
 the results therein do not depend on the choices of complete sets of residues modulo $n_i$
(in the above definition, the complete sets are $\{0,1,\ldots,n_i-1\}, i=1,2,\ldots,r$).
In this paper, we supplement this in a more general setting
by using the $P$-orderings defined and studied in \cite{Bhargava1997}.

In fact, for the case when $n=m$, there are many related earlier results which one could refer to \cite{Carlitz1964,KO1968,Kempner1921,MS1984,Sin1974}.
Especially, Bhargava \cite{Bhargava1997,Bhargava2000} has considerably enlarged the setting for polynomial functions by
replacing ``the residue class rings of $\Z$" with ``finite principal ideal rings".
For example, given a finite principal ideal ring $R_0$ and a subset $S \subseteq R_0$,  in \cite[Section 3]{Bhargava1997} there are canonical representations and counting formulas
for polynomial functions from $S$ to $R_0$; see also \cite[Theorem 18]{Bhargava1997} for the case of several variables.

In this paper, we want to generalize the above concept of polynomial function to the case of residue class rings of Dedekind domains,
as well as consider its canonical representation and counting formula by following the same strategy as in \cite{Bhargava1997,Chen1995,Chen1996}.
Different from \cite{Bhargava1997} but as in \cite{Chen1995,Chen1996}, we do not restrict polynomial functions within a single residue class ring.

\subsection{Our situation}

From now on, let $D$ be a Dedekind domain.
For any non-trivial ideal $I \subseteq D$ (that is, $I\ne 0, D$), let $D/I$ be the residue class ring of $D$ modulo $I$,
and let $\cD_I$ be a complete set of residues modulo $I$ such that $0 \in \cD_I$.
For any $a \in D$, for simplicity we still use $a$ to denote the residue class of $a$ modulo
$I$. The reader can distinguish them easily by context.

In the sequel, let $I_1,\ldots,I_r, J$ be non-trivial ideals of $D$ ($r\ge 1$).

\begin{definition}    \label{def:func1}
A function $f: D/I_1 \times \cdots \times D/I_r \to D/J$ is said to be a \textit{polynomial function}, if it is represented by a polynomial $F\in D[x_1,\ldots,x_r]$ such that
$$
f(b_1,\ldots,b_r) \equiv F(b_1,\ldots,b_r) \pmod{J}
$$
for any $(b_1,\ldots,b_r)\in \cD_{I_1} \times \cdots \times \cD_{I_r}$,
where $(b_1,\ldots,b_r)$ is considered as an element in $D^r$ when evaluating $F(b_1,\ldots,b_r)$.
\end{definition}

In this paper, the aim is to provide canonical representations for the polynomial functions from $D/I_1 \times \cdots \times D/I_r$ to $D/J$
and give counting formulas for such functions.
Furthermore, we want to show that the set of such polynomial functions and so the counting formulas
do not depend on the choices of $\cD_{I_1}, \ldots, \cD_{I_r}$.

As applications, we not only recover the main results in the integer case, but also obtain new results for the polynomial rings over finite fields.

\section{Preliminaries}

\subsection{More on our setting}

Let
$$
K=\lcm [I_1, \ldots, I_r, J]   \qquad   \textrm{and}   \qquad     R = D/K.
$$
For any non-trivial ideal $I$ of $D$ with $I\mid K$, we denote by $\bar{I}$ the residue of $I$ modulo $K$.
Note that we can view $D/I$ as a subset of $R$, because any two distinct residues modulo $I$
can naturally represent two distinct residues modulo $K$.
So, in the sequel we view $D/I_1, \ldots, D/I_r, D/J$ as subsets of $R$.
Then, in \cite[Theorem 18]{Bhargava1997} there is a canonical representation of
polynomial functions from $D/I_1 \times \cdots \times D/I_r$ to $R$.
However, our purpose here is to study polynomial functions from $D/I_1 \times \cdots \times D/I_r$ to $D/J$.
We use the strategy in \cite{Bhargava1997} to reach our objective.

Assume that we have a prime factorization:
\begin{equation*}  \label{eq:factor}
K = P_1^{e_1} \cdots P_n^{e_n},
\end{equation*}
where $P_1,\ldots,P_n$ are pairwise distinct prime ideals of $D$ and each $e_i$ is a positive integer.
Then, the prime ideals of $R$ are exactly $\bP_1,\ldots, \bP_n$, and we also have
\begin{equation}  \label{eq:Pi}
\bP_i^{e_i} = \bP_i^{e_i+1}, \quad i=1,\ldots, n.
\end{equation}

\subsection{$P$-orderings}

Let $\N$ be the set of non-negative integers.
Now we recall some basic concepts introduced in \cite{Bhargava1997}.
For any non-zero prime ideal $P$ of $R$ and any element $a\in R$, let $w_P(a)$ be the highest power of $P$ containing $a$
(where $w_P(0)$ is defined to be the zero ideal).
Then, given a non-empty subset $X$ of $R$, we obtain a so-called \textit{$P$-ordering of $X$} as follows: choose $b_0$ to be any element of $X$,
and for $k=1,2,\ldots$ choose $b_k\in X$ to minimize the exponent of the highest power of $P$ dividing
$$
(b_k-b_0)(b_k-b_1) \cdots (b_k - b_{k-1}).
$$
Given such a $P$-ordering $\{b_i\}_{i\in \N}$ of $X$, we define the \textit{associated $P$-sequence of $R$} by
$v_0(X,P)=R$ and
$$
v_k(X,P) = w_P((b_k-b_0)(b_k-b_1) \cdots (b_k - b_{k-1})), \quad k=1,2,\ldots.
$$
If $X$ is a finite set, then for any $k \ge |X|$ we must have that $v_k(X,P)$ is the zero ideal.
Moreover, by \cite[Lemma 3]{Bhargava1997}, for any integer $0 \le k < |X|$, $v_k(X,P) \ne R$ for only finitely many primes $P$.
By the construction of a $P$-ordering, we know that (see also \cite[Lemma 1]{Bhargava1997}):
\begin{equation}    \label{eq:wv}
w_P((a-b_0)(a-b_1)\cdots (a-b_{k-1})) \subseteq v_k(X,P), \quad \textrm{for any $a\in X$}.
\end{equation}

Furthermore, by \cite[Theorem 1]{Bhargava1997} we know that any two $P$-orderings of $X$ give the same associated $P$-sequence.
So, the sequence $\{v_k(X,P)\}_{k\in \N}$ does not depend on the choice of the $P$-ordering.
We also define the sequence of \textit{factorial ideals} corresponding to the pair $(X,R)$ by $v_0(X)=R$ and
$$
v_k(X) = \prod_{\textrm{non-zero prime ideal $P$}} v_k(X,P), \quad k=1,2,\ldots,
$$
which again do not depend on the choices of such $P$-orderings.
In particular, if $X$ is a finite set, then for any $k \ge |X|$, $v_k(X)$ is the zero ideal.

Note that the prime ideals of $R$ are exactly $\bP_1,\ldots,\bP_n$.
For each $1\le i \le r$ and $1\le l \le n$, let $\{a_{l,i,j}\}_{j\in \N}$ be a fixed $\bP_l$-ordering of $D/I_i$ such that $a_{l,i,0}=0$.
Using the Chinese Remainder Theorem, for each $1\le i \le r$ we construct a sequence $\{a_{i,j}\}_{j\in \N}$ of elements of $R$
such that $a_{i,0}=0$ and $a_{i,j} \equiv a_{l,i,j} \pmod{\bP_l^{e_l}}$ for all $1\le l \le n$ and $j\in \N$.
This is crucial for our deductions.
For the sequence of factorial ideals for $R$, we have
\begin{equation}    \label{eq:vk}
v_k(D/I_i) = \prod_{l=1}^{n} v_k(D/I_i,\bP_l), \quad k=0,1,2,\ldots.
\end{equation}
In fact, by construction, for any integer $k \ge 0$ we have
\begin{equation}    \label{eq:vki}
v_k(D/I_i) = \langle (a_{i,k}-a_{i,0})(a_{i,k}-a_{i,1}) \cdots (a_{i,k} - a_{i,k-1}) \rangle.
\end{equation}

\subsection{A basis}

Let $R[x_1,\ldots,x_r]$ be the polynomial ring of $r$ variables over $R$.
We now define a basis for $R[x_1,\ldots,x_r]$ over $R$.
We first define an ordering in $\N^r$.

\begin{definition}
For any $\bk=(k_1,\ldots,k_r)$ and $\bh=(h_1,\ldots,h_r)$,
we say that $\bk$ is less than $\bh$, denoted by $\bk < \bh$,
if there exists $j$ such that $k_j < h_j$ and $k_i=h_i$ for all $i<j$.
As usual, $\bk \le \bh$ means that $\bk<\bh$ or $\bk=\bh$.
\end{definition}

Note that the above ordering automatically gives an ordering for the monomials of the ring $R[x_1,\ldots,x_r]$,
which is used later on without indication.

\begin{definition}
For any $\bk=(k_1,\ldots,k_r)\in \N^r$, we define
$$
(\bx)_\bk = \prod_{i=1}^{r} (x_i)_{k_i},
$$
where $(x_i)_{k_i}$ is defined as follows:
$$
\textrm{$(x_i)_{k_i}=1$ if $k_i=0$, otherwise $(x_i)_{k_i} = \prod_{j=0}^{k_i-1} (x_{i}-a_{i,j})$.}
$$
\end{definition}

Clearly, the polynomials $(\bx)_\bk,\bk\in \N^r$, form an $R$-basis for $R[x_1,\ldots,x_r]$.

\subsection{Notation and convention}

Define
\begin{itemize}
\item $\mu(I_i,J)=$ the smallest positive integer $k$ such that $v_k(D/I_i)\subseteq \bar{J}$ if such $k$ exists; otherwise, put $\mu(I_i,J)=|D/I_i|$.
(Note that $\mu(I_i,J)$ can be equal to infinity.)

\item for any $R$-module $M$, $\Ann(M) = \{b\in R:\, bM=0\}$, which is the so-called \textit{annihilator} of $M$ as an $R$-module.

\item for any ideal $I$ of $R$, $\Ann_J(I) = \{b\in R:\, bI \subseteq \bar{J}\}$, which is the so-called \textit{ideal quotient} $(\bar{J}:I)$.
\end{itemize}

We indicate again that we view $D/I_1, \ldots, D/I_r, D/J$ as subsets of $R$.
We define a relation $\sim$ in $R[x_1,\ldots,x_r]$ as follows: for any $F,G\in R[x_1,\ldots,x_r]$,
$F \sim G$ if and only if $F$ and $G$ represent the same polynomial function from $D/I_1 \times \cdots \times D/I_r$ to $D/J$.

\section{Main results}

\subsection{A criterion}

We first determine under which condition every function from $D/I_1 \times \cdots \times D/I_r$ to $D/J$ is a polynomial function.

\begin{theorem}   \label{thm:criterion}
Every function $f: D/I_1 \times \cdots \times D/I_r \to D/J$ is a polynomial function if and only if for any $i=1,\ldots,r$ and any prime factor $P$ of $J$,
no two elements of $\cD_{I_i}$ are congruent modulo $P$.
\end{theorem}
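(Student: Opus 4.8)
The plan is to prove both directions by reducing to the local picture (one prime $P$ at a time), since representability is about matching values in $D/J$ and $\bar J = \prod_{P \mid J} \bar P^{e_P}$ decomposes via the Chinese Remainder Theorem. For the easy direction, suppose for some $i$ and some prime $P \mid J$ there are two elements $b, b' \in \cD_{I_i}$ with $b \equiv b' \pmod P$. Then any polynomial $F$ satisfies $F(\ldots, b, \ldots) \equiv F(\ldots, b', \ldots) \pmod P$, hence a fortiori these two values agree modulo $P$ in $D/J$; so a function $f$ that takes values differing modulo $P$ at two such tuples (which exists since $|D/J| \ge |D/P| \ge 2$) cannot be a polynomial function. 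This shows the stated condition is necessary.

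For the converse, assume that for every $i$ and every prime $P \mid J$, the elements of $\cD_{I_i}$ are pairwise incongruent modulo $P$. I want to show every $f$ is a polynomial function. The natural tool is the basis $(\bx)_\bk$ together with the factorial ideals: by the theory recalled in the excerpt (essentially \cite[Theorem 18]{Bhargava1997} adapted via the relation $\sim$), a function into $D/J$ is polynomial precisely when it lies in the span of the images of $(\bx)_\bk$ modulo $\bar J$, and the obstruction to surjectivity onto \emph{all} functions is controlled by $\mu(I_i,J)$ versus $|\cD_{I_i}|$. Concretely, I expect to argue that under the hypothesis, for each $i$ and each prime $\bar P_l$ dividing $\bar J$, the $\bar P_l$-sequence $v_k(D/I_i, \bar P_l)$ is the \emph{full} ring $R$ for all $k < |\cD_{I_i}|$: indeed incongruence mod $P$ of all residues means each new factor $(a_{i,k} - a_{i,j})$ in the $\bar P_l$-ordering is a unit at $\bar P_l$, so the product $(a_{i,k}-a_{i,0})\cdots(a_{i,k}-a_{i,k-1})$ has trivial $\bar P_l$-valuation. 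Hence $v_k(D/I_i) \not\subseteq \bar J$ for all $k < |\cD_{I_i}|$, which by definition forces $\mu(I_i, J) = |D/I_i|$.

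**From there**, the counting/representation machinery (to be proved in the body of the paper, but usable here in the form of the basic finite-difference argument) gives that the values of a polynomial function at the grid $\cD_{I_1} \times \cdots \times \cD_{I_r}$ can be prescribed arbitrarily in $D/J$: one builds the interpolating polynomial coordinate by coordinate, using that at each step the relevant leading coefficient — a product of differences of residues — is invertible modulo $J$ (this is exactly where $\mu(I_i,J) = |D/I_i|$ and the unit property at every $\bar P_l \mid \bar J$ enter). So the evaluation map from $R[x_1,\ldots,x_r]/{\sim}$ to the set of all functions $D/I_1 \times \cdots \times D/I_r \to D/J$ is surjective, i.e. every function is a polynomial function.

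**The main obstacle** I anticipate is the bookkeeping in the converse: I must be careful that ``incongruent modulo every prime $P \mid J$'' translates correctly into a statement about $\bar P_l$-valuations in $R = D/K$ (where $K$ may involve $P_l$ to a higher power coming from the $I_i$'s), and in particular that it is genuinely the primes dividing $J$ — not those dividing only some $I_i$ — that govern whether $v_k(D/I_i) \subseteq \bar J$. The relation \eqref{eq:vk}–\eqref{eq:vki} expressing $v_k(D/I_i)$ as the product over all primes $\bar P_l$ is the key identity here: since membership in $\bar J$ only constrains the $\bar P_l$-components for $P_l \mid J$, the condition on $\cD_{I_i}$ need only be imposed at those primes, and the multivariate interpolation then goes through uniformly. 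I also need the elementary fact that such a complete-residue-system interpolation is independent of the chosen $P$-orderings, which follows from the $P$-ordering uniqueness results already cited.
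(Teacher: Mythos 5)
Your proof is correct in substance, and your necessity direction is essentially the paper's: fix two residues $b,b'\in\cD_{I_i}$ congruent modulo a prime $P\mid J$, observe that any polynomial takes congruent values modulo $P$ at the corresponding tuples, and prescribe a function violating this. For sufficiency the paper is more direct: from the hypothesis it deduces that $c-d$ is a unit in $D/J$ for distinct $c,d\in\cD_{I_i}$ and simply writes down the Lagrange interpolation polynomial. You instead route the argument through the $P$-ordering machinery, first showing $v_k(D/I_i,\bar{P}_l)=R$ for every prime $\bar{P}_l$ with $P_l\mid J$ (so that $\mu(I_i,J)=|D/I_i|$), and then performing a Newton-style interpolation in the basis $(\bx)_\bk$, using that the leading products of differences are invertible modulo $J$. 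This is valid and has the virtue of tying the criterion to the quantity $\mu(I_i,J)$ that governs Theorems \ref{thm:rep} and \ref{thm:number}, but two cautions are in order: (i) Theorem \ref{thm:criterion} precedes those results in the paper, so you must rely on the direct finite-difference construction you sketch rather than citing the representation or counting theorems, to avoid circularity; (ii) the counting form of the argument would additionally require the finite norm property, which this theorem does not assume, so an explicit interpolation (Lagrange or Newton) is the right way to close the converse. Both routes ultimately rest on the same key fact, which you identify correctly: under the hypothesis, pairwise differences of elements of $\cD_{I_i}$ are invertible modulo $J$.
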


\begin{proof}
We first prove the necessary part by contradiction.
Without loss of generality, suppose that there is a prime factor of $J$, say $P$, such that there exist $b_1,b_2\in \cD_{I_1}$ such that
$b_1 \equiv b_2 \pmod{P}$.
So, $(b_1,0,\ldots,0)$ and $(b_2,0,\ldots,0)$ are two distinct elements in $\cD_{I_1} \times \cdots \times \cD_{I_r}$.
Then, for any function $f: D/I_1 \times \cdots \times D/I_r \to D/J$, if it is represented by a polynomial $F\in R[x_1,\ldots,x_r]$,  we have
$$
F(b_1,0,\ldots,0)\equiv F(b_2,0,\ldots,0) \pmod{P}.
$$
This means that the choices of $f(b_1,0,\ldots,0)$ and $f(b_2,0,\ldots,0)$ are not independent.
This contradicts the assumption.

Now, we prove the sufficient part.
Under the assumption, for each $1\le i \le r$, we have $\gcd(c-d,J)=1$ for any $c,d\in \cD_{I_i}$ with $c \ne d$.
Then, $c-d$ gives a unit in $D/J$, and we use $b_{i,c,d}$ to denote the inverse of $c-d$ in $D/J$.

Let $f: D/I_1 \times \cdots \times D/I_r \to D/J$ be a function.
By Lagrange interpolation, for any $(c_1,\ldots,c_r) \in \cD_{I_1}\times \cdots \times \cD_{I_r}$ we have
$$
f(c_1,\ldots,c_r) = \sum_{(d_1,\ldots,d_r)\in \cD_{I_1}\times \cdots \times \cD_{I_r}} f(d_1,\ldots,d_r) \prod_{i=1}^{r}\prod_{s\in \cD_{I_i}, \, s\ne d_i} b_{i,d_i,s}(c_i-s).
$$
Thus, $f$ is a polynomial function represented by the polynomial
$$
F(x_1,\ldots,x_r) = \sum_{(d_1,\ldots,d_r)\in \cD_{I_1}\times \cdots \times \cD_{I_r}} f(d_1,\ldots,d_r) \prod_{i=1}^{r}\prod_{s\in \cD_{I_i}, \, s\ne d_i} b_{i,d_i,s}(x_i-s).
$$
This completes the proof.
\end{proof}

\subsection{Canonical representations}

We now state a canonical representation for a polynomial function from $D/I_1 \times \cdots \times D/I_r$ to $D/J$.

\begin{theorem}\label{thm:rep}
Let $f$ be a polynomial function from $D/I_1 \times \cdots \times D/I_r$ to $D/J$. Then, $f$ can be represented by a polynomial of the form
$$
 F=\sum_{\bk \ge 0}b_{\bk}(\bx)_{\bk},
 $$
 where the coefficients $b_\bk \in R$ with $\bk=(k_1,\ldots,k_r) $ are uniquely determined modulo $\Ann_J(v_{k_1}(D/I_1)\cdots v_{k_r}(D/I_r))$,
 and the sum is over the set of all $r$-tuples $\bk$ such that $k_i < \mu(I_i,J)$ for each $1\le i \le r$.
 \end{theorem}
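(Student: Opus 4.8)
The plan is to combine two ingredients: the fact that the polynomials $(\bx)_\bk$ form an $R$-basis of $R[x_1,\ldots,x_r]$, and an analysis of which polynomials $F$ satisfy $F\sim 0$, i.e.\ represent the zero function from $D/I_1\times\cdots\times D/I_r$ to $D/J$. First I would reduce from $D$ to $R=D/K$: since every coefficient and every argument of $F$ matters only modulo $K=\lcm[I_1,\ldots,I_r,J]$, a polynomial function is the same data as a polynomial in $R[x_1,\ldots,x_r]$ evaluated on $D/I_1\times\cdots\times D/I_r\subseteq R^r$ and read modulo $\bar J$. Write an arbitrary $F\in R[x_1,\ldots,x_r]$ in the basis as $F=\sum_{\bk\ge 0}b_\bk(\bx)_\bk$ (a finite sum).

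\textbf{Step 1: truncation to $k_i<\mu(I_i,J)$.} I would first show that any term $b_\bk(\bx)_\bk$ with some $k_i\ge\mu(I_i,J)$ can be removed without changing the represented function. Fix $i$ and let $k_i\ge\mu(I_i,J)$. For any $(b_1,\ldots,b_r)\in\cD_{I_1}\times\cdots\times\cD_{I_r}$, evaluating $(x_i)_{k_i}$ at $b_i$ gives $(b_i-a_{i,0})\cdots(b_i-a_{i,k_i-1})$, whose generated ideal lies in $v_{k_i}(D/I_i)$ by \eqref{eq:wv} (applied $\bP_l$-by-$\bP_l$ together with \eqref{eq:vk}–\eqref{eq:vki} and the Chinese Remainder Theorem). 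When $\mu(I_i,J)$ is finite and $k_i\ge\mu(I_i,J)$, monotonicity of the factorial ideals (the $v_k$ form a descending chain once the underlying set is finite, and $v_k(D/I_i)\subseteq v_{\mu(I_i,J)}(D/I_i)\subseteq\bar J$) forces every evaluation of $(x_i)_{k_i}$, hence of $(\bx)_\bk$, to lie in $\bar J$; thus the term is $\sim 0$. When $\mu(I_i,J)$ is infinite there is no constraint coming from this $i$, consistent with the sum ranging over $k_i<\mu(I_i,J)=\infty$. Collecting terms, every polynomial function is represented by some $F$ of the stated shape.

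\textbf{Step 2: the uniqueness/annihilator statement.} It remains to identify exactly when $\sum_{\bk}b_\bk(\bx)_\bk\sim 0$ with the sum restricted to $k_i<\mu(I_i,J)$, and to show this happens iff each $b_\bk\in\Ann_J\big(v_{k_1}(D/I_1)\cdots v_{k_r}(D/I_r)\big)$. The ``if'' direction is immediate from Step 1's computation: if $b_\bk$ kills $v_{k_1}(D/I_1)\cdots v_{k_r}(D/I_r)$, then since every evaluation of $(\bx)_\bk$ lies in that product ideal (again \eqref{eq:wv} together with \eqref{eq:vki} and CRT across the $\bP_l$), each term $b_\bk(\bx)_\bk$ evaluates into $\bar J$, so $F\sim 0$. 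The ``only if'' direction is the crux. Here I would argue by induction on $\bk$ in the order $<$ on $\N^r$: suppose $F=\sum b_\bk(\bx)_\bk\sim 0$ and let $\bk^\ast$ be minimal with $b_{\bk^\ast}$ not in the required annihilator. Pick, coordinatewise, elements $a_{i,k^\ast_i}\in D/I_i$ realizing the factorial ideal, i.e.\ with $\langle(a_{i,k^\ast_i}-a_{i,0})\cdots(a_{i,k^\ast_i}-a_{i,k^\ast_i-1})\rangle=v_{k^\ast_i}(D/I_i)$ by \eqref{eq:vki}. Evaluate $F$ at $(a_{1,k^\ast_1},\ldots,a_{r,k^\ast_r})$. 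By the upper-triangular structure of the basis — $(x_i)_{k_i}$ vanishes at $a_{i,j}$ for $j<k_i$ — all terms with $\bk>\bk^\ast$ in some coordinate beyond the relevant range drop out or are controlled by the induction hypothesis, and one is left needing $b_{\bk^\ast}\cdot(\text{generator of }v_{k^\ast_1}(D/I_1)\cdots v_{k^\ast_r}(D/I_r))\in\bar J$, i.e.\ $b_{\bk^\ast}\in\Ann_J(\cdots)$, a contradiction.

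\textbf{Main obstacle.} The delicate point is the bookkeeping in Step 2's ``only if'' argument: the monomial order $<$ is not the same as the divisibility/partial order on $\N^r$, so when I evaluate at $(a_{1,k^\ast_1},\ldots,a_{r,k^\ast_r})$ the ``higher'' terms do not all automatically vanish, and I must use the defining minimization property of $P$-orderings (via \eqref{eq:wv}) prime-by-prime — together with the CRT-glued sequences $\{a_{i,j}\}$ and relation \eqref{eq:vki} — to show that the surviving higher-order contributions still land in $\bar J$ or are absorbed by the inductive hypothesis. Making this precise, rather than the ``if'' direction or the truncation in Step 1, is where the real work lies; it is essentially the several-variable, mixed-modulus analogue of the argument behind \cite[Theorem 18]{Bhargava1997}, and I would follow that strategy closely.
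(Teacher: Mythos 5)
Your proposal is correct and follows essentially the same route as the paper, which packages your two steps as Lemmas \ref{lemma1}--\ref{lemma4}: basis expansion, truncation of terms with $k_i\ge\mu(I_i,J)$, term-by-term reduction by induction on the order $<$ with evaluation at $\ba=(a_{1,h_1},\ldots,a_{r,h_r})$, and the annihilator identification via \eqref{eq:vki}. The obstacle you flag in fact dissolves: if $\bk>\bh$ in this lexicographic order, the first coordinate $j$ where they differ has $k_j>h_j$, so $(x_j)_{k_j}$ contains the factor $x_j-a_{j,h_j}$ and hence $(\ba)_{\bk}=0$ automatically, exactly as the paper uses in Lemma \ref{lemma3}.
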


\begin{remark}
 We emphasize that by Theorem \ref{thm:rep}, the set of polynomial functions from $D/I_1 \times \cdots \times D/I_r$ to $D/J$
 does not depend on the choices of such complete sets $\cD_{I_i}$ of residues and the choices of such orderings.
\end{remark}

In order to prove Theorem \ref{thm:rep}, we need to make some preparations.

\begin{lemma}\label{lemma1}
Let $F\in R[x_{1},\ldots,x_{r}]$ whose leading monomial is $\bx^{\bj}=x_1^{j_1} \cdots x_r^{j_r}$. Then
$$
F=\sum_{0\le \bk \le \bj}b_{\bk}(\bx)_{\bk},
$$
where the coefficients $b_{\bk}\in R$ are uniquely determined by $F$.
\end{lemma}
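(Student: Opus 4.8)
The plan is to prove Lemma \ref{lemma1} by induction on the leading monomial $\bx^{\bj}$ with respect to the ordering on $\N^r$ introduced above. The key observation is that each basis polynomial $(\bx)_{\bk}$ is monic with leading monomial exactly $\bx^{\bk}$, since $(x_i)_{k_i} = \prod_{j=0}^{k_i-1}(x_i - a_{i,j})$ is a monic polynomial of degree $k_i$ in $x_i$ alone. Hence, given $F$ with leading monomial $\bx^{\bj}$ and leading coefficient $c \in R$, the polynomial $F - c\,(\bx)_{\bj}$ either is zero or has a leading monomial strictly smaller than $\bx^{\bj}$ in the given ordering. Since this ordering is a well-ordering on the (finite) set of monomials appearing in any fixed polynomial — indeed on any down-set $\{\bk : \bk \le \bj\}$, which is finite — the subtraction process terminates, and iterating it expresses $F$ as an $R$-linear combination $\sum_{0 \le \bk \le \bj} b_{\bk}(\bx)_{\bk}$. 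This establishes existence of the representation.

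For uniqueness, I would argue that the polynomials $\{(\bx)_{\bk} : \bk \in \N^r\}$ form an $R$-basis of $R[x_1,\ldots,x_r]$, which is already asserted in the excerpt immediately after the definition of $(\bx)_{\bk}$. Concretely, if $\sum_{0 \le \bk \le \bj} b_{\bk}(\bx)_{\bk} = 0$ with not all $b_{\bk}$ zero, let $\bk_0$ be the largest index with $b_{\bk_0} \ne 0$; then the coefficient of the monomial $\bx^{\bk_0}$ on the left-hand side equals $b_{\bk_0}$ (no other term $(\bx)_{\bk}$ with $\bk \le \bk_0$, $\bk \ne \bk_0$ can contribute to $\bx^{\bk_0}$, since such $(\bx)_{\bk}$ has leading monomial $\bx^{\bk} < \bx^{\bk_0}$ and all its monomials are $\le \bx^{\bk}$), forcing $b_{\bk_0} = 0$, a contradiction. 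Thus the representation of $F$ is unique, and in particular the $b_{\bk}$ are determined by $F$.

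The only point requiring a little care — and the step I would flag as the main (minor) obstacle — is verifying that the monomial ordering behaves well enough for the triangularity argument: namely that every monomial occurring in $(\bx)_{\bk}$ is $\le \bx^{\bk}$ (not merely that the leading one equals $\bx^{\bk}$), and that the ordering restricted to $\{\bh \in \N^r : \bh \le \bj\}$ is finite so that the descent terminates. Both are elementary: a monomial of $(x_i)_{k_i}$ has the form $\pm(\text{product of } a_{i,j}\text{'s}) \cdot x_i^{t}$ with $t \le k_i$, so multiplying over $i$ gives monomials $\bx^{\bh}$ with $h_i \le k_i$ componentwise, hence $\bh \le \bk$ in our ordering; and $\{\bh : \bh \le \bj\} \subseteq \{\bh : h_i \le j_i \text{ for all } i\} \cup \{\bh : \bh < \bj\}$ is contained in the finite box $\prod_i \{0,1,\ldots,j_i\}$ together with finitely many — in fact, it is exactly that box, so it is finite. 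With these routine facts in hand, the induction runs without difficulty, and I would present the argument in the order: (1) establish the box/triangularity facts about $(\bx)_{\bk}$; (2) do the descent to get existence; (3) extract the top coefficient to get uniqueness.
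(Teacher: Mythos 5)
Your overall strategy coincides with the paper's: the paper disposes of this lemma in one line by citing the fact that $\{(\bx)_{\bk} : \bk\in\N^r\}$ is an $R$-basis of $R[x_1,\ldots,x_r]$, and your triangular-descent argument is precisely the standard proof of that fact, so the substance of your existence and uniqueness steps is fine. However, one justification you give is false and needs repair: under the paper's ordering on $\N^r$ (which is lexicographic), the down-set $\{\bh : \bh \le \bj\}$ is \emph{not} equal to the box $\prod_i\{0,\ldots,j_i\}$, and it is infinite whenever $r\ge 2$ and $j_i>0$ for some $i<r$ --- for instance $(0,n) < (1,1)$ for every $n\in\N$. The descent really can leave the box: for $F = x_1x_2 + x_2^5$ the leading monomial is $\bx^{(1,1)}$, yet after subtracting $(\bx)_{(1,1)} = x_1x_2$ one is left with $x_2^5$, whose expansion requires $(\bx)_{(0,5)}$; note $(0,5)\le(1,1)$ lexicographically but $(0,5)$ is not in the box. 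So finiteness of the down-set cannot be the reason the process terminates. Two correct repairs are available: either observe that the lexicographic order on $\N^r$ is a well-order (order type $\omega^r$), so the strictly decreasing sequence of leading monomials produced by the descent must be finite; or observe that, since every monomial of $(\bx)_{\bk}$ is componentwise $\le\bk$ (as you prove), all monomials arising during the descent remain inside the finite componentwise box spanned by the monomials of the original $F$, and the leading monomials strictly decrease within that finite set. With that single fix your argument is complete and matches the paper's intent.
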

\begin{proof}
The result follows from the fact that $\{(\bx)_{\bk}:\,\bk\in \N^r\}$ is an $R$-basis of $ R[x_1,\ldots,x_r] $.
\end{proof}

\begin{lemma}\label{lemma2}
Let $\bk=(k_1,\ldots,k_r)\in \N^r$, and assume that $k_i\ge \mu(I_i,J)$ for some $i$ (automatically $\mu(I_i,J)$ is finite). Then, $(\bx)_{\bk}\sim 0 $.
\end{lemma}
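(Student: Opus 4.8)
The plan is to show that the polynomial $(\bx)_{\bk}$ evaluates to something divisible by $\bar J$ at every point of $\cD_{I_1} \times \cdots \times \cD_{I_r}$, which is exactly the statement $(\bx)_{\bk} \sim 0$. Since $(\bx)_{\bk} = \prod_{s=1}^r (x_s)_{k_s}$ factors as a product over the coordinates, it suffices to work with the single bad coordinate $i$ for which $k_i \ge \mu(I_i,J)$: if $(x_i)_{k_i}$ evaluated at any $b_i \in \cD_{I_i}$ already lies in $\bar J$, then the whole product lies in $\bar J$ regardless of the other coordinates. So the core claim reduces to the one-variable assertion that $(b_i - a_{i,0})(b_i - a_{i,1})\cdots(b_i - a_{i,k_i-1}) \in \bar J$ for every $b_i \in \cD_{I_i}$, whenever $k_i \ge \mu(I_i,J)$.

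To prove that one-variable claim, I would use the factorial-ideal machinery from the Preliminaries. First, by the very definition of $\mu(I_i,J)$, we have $v_{\mu(I_i,J)}(D/I_i) \subseteq \bar J$, and since the factorial ideals are nested decreasing (this follows from the $P$-ordering construction: passing from $k$ to $k+1$ only multiplies in more factors), also $v_{k_i}(D/I_i) \subseteq v_{\mu(I_i,J)}(D/I_i) \subseteq \bar J$ for all $k_i \ge \mu(I_i,J)$. Next I would invoke the key inclusion \eqref{eq:wv}, applied prime-by-prime: for each prime $\bP_l$ of $R$, the element $(b_i - a_{i,0})\cdots(b_i - a_{i,k_i-1})$, reduced modulo $\bP_l^{e_l}$, agrees with $(b_i - a_{l,i,0})\cdots(b_i - a_{l,i,k_i-1})$ by the Chinese Remainder Theorem construction of the $a_{i,j}$, so its $\bP_l$-adic valuation is at least that given by $v_{k_i}(D/I_i,\bP_l)$. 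Combining over all primes via \eqref{eq:vk}, the principal ideal generated by $(b_i - a_{i,0})\cdots(b_i - a_{i,k_i-1})$ is contained in $v_{k_i}(D/I_i) \subseteq \bar J$, which is what we want.

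One technical point to get right is the interaction between the ambient ring $R = D/K$ (which is not a domain) and the $P$-adic valuations: the statement $w_P(x) \subseteq v_k$ is an inclusion of ideals of $R$, and one should check that \eqref{eq:wv} is being applied with $X = D/I_i$ viewed inside $R$ and with the fixed $\bP_l$-ordering $\{a_{l,i,j}\}_j$, which is legitimate since that is precisely how the $a_{i,j}$ were built. Another point is that when $b_i$ happens to equal one of $a_{i,0},\ldots,a_{i,k_i-1}$ modulo some $\bP_l^{e_l}$, the corresponding factor is $0$ in $R/\bP_l^{e_l}$, so the valuation bound is trivially satisfied there; there is no edge case that breaks the argument. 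I expect the main obstacle to be purely expository rather than mathematical: carefully threading the reduction modulo $\bP_l^{e_l}$, the identification of $D/I_i$ as a subset of $R$, and the equalities \eqref{eq:vk}--\eqref{eq:vki} so that \eqref{eq:wv} can be legitimately invoked coordinatewise, without circularity. Everything else is a short chain of set-inclusions.
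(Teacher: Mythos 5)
Your proposal is correct and follows essentially the same route as the paper's own proof: reduce to the single coordinate $i$, apply \eqref{eq:wv} prime-by-prime using the Chinese Remainder Theorem construction of $\{a_{i,j}\}$, combine via \eqref{eq:vk} to land in $v_{k_i}(D/I_i)$, and conclude from $v_{k_i}(D/I_i)\subseteq \bar{J}$. The only difference is that you explicitly justify the monotonicity $v_{k_i}(D/I_i)\subseteq v_{\mu(I_i,J)}(D/I_i)$, which the paper uses implicitly; that is a welcome clarification rather than a divergence.
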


\begin{proof}
By definition, it suffices to prove that $(x_i)_{k_i} \sim 0$, which is equivalent to $(c_i)_{k_i} \equiv 0  \pmod{J}$ for any $c_i \in D/I_i$.

By \eqref{eq:wv} and the construction of the sequence $\{a_{i,j}\}_{j\in \N}$, we know that $(c_i-a_{i,0})(c_i-a_{i,1})\cdots (c_i-a_{i,k_i-1}) \in v_{k_i}(D/I_i,\bar{P}_l)$
for any $c_i \in D/I_i$ and any $1\le l \le n$, and so by \eqref{eq:vk}, it is an element of $v_{k_i}(D/I_i)$ for any $c_i \in D/I_i$.
Since $k_i\ge \mu(I_i,J)$, we have $v_{k_i}(D/I_i) \subseteq \bar{J}$, and thus any such $(c_i-a_{i,0})(c_i-a_{i,1})\cdots (c_i-a_{i,k_i-1}) \in \bar{J}$, that is
$$
(c_i-a_{i,0})(c_i-a_{i,1})\cdots (c_i-a_{i,k_i-1}) \equiv 0  \pmod{J}.
$$
Hence, $(x_i)_{k_i} \sim 0$.
\end{proof}

\begin{lemma}\label{lemma3}
$\sum_{0\le \bk \le \bj}b_{\bk}(\bx)_{\bk}\sim 0$ if and only if $b_{\bk}(\bx)_{\bk}\sim 0$ for all $0\le \bk \le \bj$.
\end{lemma}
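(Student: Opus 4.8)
One direction is immediate: evaluation of polynomials at a point being additive, the relation $\sim$ respects addition, so a (finite) sum of polynomials each representing the zero function again represents the zero function; hence $b_{\bk}(\bx)_{\bk}\sim 0$ for all $\bk$ forces $\sum_{0\le\bk\le\bj}b_{\bk}(\bx)_{\bk}\sim 0$. The plan is to prove the forward implication by strong induction on $\bk$ with respect to the componentwise partial order on $\N^{r}$, which is well founded; I will show $b_{\bk}(\bx)_{\bk}\sim 0$ for every $\bk$ (for indices outside the finite support of the expansion one has $b_{\bk}=0$ and there is nothing to prove). Fix $\bk=(k_{1},\dots,k_{r})$ and assume $b_{\bk'}(\bx)_{\bk'}\sim 0$ for every $\bk'$ lying strictly below $\bk$ in the componentwise order. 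Subtracting these finitely many ($\sim 0$) terms from the hypothesis gives $G\sim 0$, where $G=b_{\bk}(\bx)_{\bk}+\sum_{\bk''}b_{\bk''}(\bx)_{\bk''}$ and the remaining sum runs over the $\bk''$ in the support with $k''_{i}>k_{i}$ for at least one index $i$.

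The substantive step is to isolate $b_{\bk}$ by evaluating $G$ at $\mathbf{a}_{\bk}:=(a_{1,k_{1}},\dots,a_{r,k_{r}})$. For each $\bk''$ occurring in the remaining sum some coordinate satisfies $k''_{i_{0}}>k_{i_{0}}$, so the factor $x_{i_{0}}-a_{i_{0},k_{i_{0}}}$ appears in $(x_{i_{0}})_{k''_{i_{0}}}$ and $(\bx)_{\bk''}$ vanishes at $\mathbf{a}_{\bk}$; thus $G(\mathbf{a}_{\bk})=b_{\bk}\cdot(\bx)_{\bk}(\mathbf{a}_{\bk})$, and by \eqref{eq:vki} the element $(\bx)_{\bk}(\mathbf{a}_{\bk})=\prod_{i=1}^{r}\prod_{m=0}^{k_{i}-1}(a_{i,k_{i}}-a_{i,m})$ generates $v_{k_{1}}(D/I_{1})\cdots v_{k_{r}}(D/I_{r})$. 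It remains to verify $G(\mathbf{a}_{\bk})\in\bar J$. Since $\mathbf{a}_{\bk}$ is built via the Chinese Remainder Theorem it need not itself be an admissible argument of the polynomial function, so I would compare it with the tuples $\mathbf{a}_{\bk}^{(l)}:=(a_{l,1,k_{1}},\dots,a_{l,r,k_{r}})$, $l=1,\dots,n$, which are admissible because their $i$-th coordinate belongs to $D/I_{i}$ (it comes from a $\bP_{l}$-ordering of $D/I_{i}$): by the construction of the $\{a_{i,j}\}$ we have $\mathbf{a}_{\bk}\equiv\mathbf{a}_{\bk}^{(l)}\pmod{\bP_{l}^{e_{l}}}$ coordinatewise, hence $G(\mathbf{a}_{\bk})\equiv G(\mathbf{a}_{\bk}^{(l)})\pmod{\bP_{l}^{e_{l}}}$, and $G\sim 0$ gives $G(\mathbf{a}_{\bk}^{(l)})\equiv 0\pmod{\bar J}$; thus $G(\mathbf{a}_{\bk})$ lies in the $\bP_{l}$-primary component of $\bar J$ for every $l$, so $G(\mathbf{a}_{\bk})\in\bar J$. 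Since $(\bx)_{\bk}(\mathbf{a}_{\bk})$ generates that ideal, this yields $b_{\bk}\cdot\big(v_{k_{1}}(D/I_{1})\cdots v_{k_{r}}(D/I_{r})\big)\subseteq\bar J$, i.e. $b_{\bk}\in\Ann_{J}\big(v_{k_{1}}(D/I_{1})\cdots v_{k_{r}}(D/I_{r})\big)$.

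To close the induction I would conclude $b_{\bk}(\bx)_{\bk}\sim 0$ exactly as in the proof of Lemma~\ref{lemma2}: for any $(c_{1},\dots,c_{r})\in\cD_{I_{1}}\times\cdots\times\cD_{I_{r}}$, each $(c_{i})_{k_{i}}=\prod_{m=0}^{k_{i}-1}(c_{i}-a_{i,m})$ lies in $v_{k_{i}}(D/I_{i})$ by \eqref{eq:wv}, \eqref{eq:vk} and the construction of the $\{a_{i,j}\}$, whence $(\bx)_{\bk}$ evaluated at $(c_{1},\dots,c_{r})$ lies in $v_{k_{1}}(D/I_{1})\cdots v_{k_{r}}(D/I_{r})$ and multiplying by $b_{\bk}$ lands in $\bar J$. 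I expect the main obstacle to be precisely this isolation step: the point $\mathbf{a}_{\bk}$ that separates out $b_{\bk}$ is a Chinese-Remainder assembly that need not be a legitimate input, so the vanishing of $G$ there has to be routed through the prime-adapted admissible points $\mathbf{a}_{\bk}^{(l)}$ and then reassembled from the primary decomposition of $\bar J$; the rest reduces to bookkeeping with the basis $\{(\bx)_{\bk}\}$ and the already-established properties of the factorial ideals.
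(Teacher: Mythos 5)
Your proof is correct and follows essentially the same strategy as the paper's: induct over the multi-indices, kill the higher terms by evaluating at $\ba_{\bk}=(a_{1,k_1},\ldots,a_{r,k_r})$, identify $(\ba_{\bk})_{\bk}$ as a generator of $v_{k_1}(D/I_1)\cdots v_{k_r}(D/I_r)$ via \eqref{eq:vki}, and then propagate back to all of $\cD_{I_1}\times\cdots\times\cD_{I_r}$ via \eqref{eq:wv}. Two differences are worth recording. First, you induct on the componentwise partial order whereas the paper inducts on the lexicographic order of Section~2; both orders are well-founded and both make every remaining tail term vanish at $\ba_{\bk}$, so this is cosmetic. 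Second, and more substantively, you are right that $\ba_{\bk}$ is a Chinese Remainder assembly that need not lie in the image of $\cD_{I_1}\times\cdots\times\cD_{I_r}$ in $R$, so the relation $\sim 0$ does not by itself yield $G(\ba_{\bk})\equiv 0\pmod{J}$; the paper invokes \eqref{eq:bhj} at exactly this point without comment. Your detour through the admissible points $(a_{l,1,k_1},\ldots,a_{l,r,k_r})$, followed by reassembly from the congruences modulo $\bP_l^{e_l}$ and the primary decomposition of $\bar{J}$, supplies the justification the paper leaves implicit. Your argument also absorbs the forward direction of Lemma~\ref{lemma4} along the way, which is harmless.
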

\begin{proof}
Clearly, we only need to show the necessity.

The necessity is trivial when $\bj=0$.
Now, we assume that $\bj>0$.
Suppose that
\begin{equation}   \label{eq:bkj}
\sum_{0\le \bk \le \bj}b_{\bk}(\bx)_{\bk}\sim 0.
\end{equation}
Then, we have
$$
\sum_{0\le \bk \le \bj}b_{\bk}(0)_{\bk} = b_0 + \sum_{0 < \bk \le \bj}b_{\bk}(0)_{\bk} \equiv 0   \pmod{J},
$$
which, together with $(0)_{\bk}=0$ for any $\bk > 0$ (because for each sequence $\{a_{i,j}\}_{j\in \N}$, the first term $a_{i,0}=0$),
yields that $b_{0}\equiv 0 \pmod{J}$. So, $b_{0}(\bx)_{0} \sim 0$.

Now we proceed by induction. Assume that there is $\bh=(h_1,\ldots,h_r)$ such that $b_{\bk}(\bx)_{\bk}\sim 0$ for all $\bk < \bh \le \bj$.
We shall show that $b_{\bh}(\bx)_{\bh}\sim 0$. From the induction hypothesis and the original condition \eqref{eq:bkj},
we have
\begin{equation}   \label{eq:bhj}
\sum_{\bh \le \bk \le \bj} b_{\bk}(\bx)_{\bk}\sim 0.
\end{equation}
By definition, choosing $\ba=(a_{1,h_1},\ldots,a_{r,h_r})$, we get $(\ba)_{\bk}=0$ for any $\bk > \bh$.
So, by \eqref{eq:bhj}  we obtain
 $$
 b_{\bh}(\ba)_{\bh}=b_{\bh}\prod_{i=1}^{r}(a_{i,h_i}-a_{i,0})\cdots (a_{i,h_i}-a_{i,h_i-1}) \equiv 0  \pmod{J}.
 $$
 Then, by \eqref{eq:wv} and the construction of the sequences $\{a_{i,j}\}_{j\in \N}, i=1,2,\ldots,r$, we have that
 for any $\bc=(c_1,\ldots,c_r) \in D/I_1 \times \cdots \times D/I_r$,
 $$
 b_{\bh}(\bc)_{\bh}=b_{\bh}\prod_{i=1}^{r}(c_i-a_{i,0})\cdots (c_i-a_{i,h_i-1}) \equiv 0  \pmod{J},
 $$
 that is, $b_{\bh}(\bx)_{\bh}\sim 0$.
This in fact completes the proof.
\end{proof}

\begin{lemma}\label{lemma4}
For any $\bk=(k_1,\ldots,k_r) \in \N^r$, $b_{\bk}(\bx)_{\bk}\sim 0 $ if and only if
$$
b_{\bk} \in \Ann_J(v_{k_1}(D/I_1)\cdots v_{k_r}(D/I_r)).
$$
\end{lemma}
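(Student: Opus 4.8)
The plan is to translate the relation $b_{\bk}(\bx)_{\bk}\sim 0$ into a statement about a product of ideals and then recognize that statement as membership in the ideal quotient $\Ann_J(v_{k_1}(D/I_1)\cdots v_{k_r}(D/I_r))$. First I would unwind the definition of $\sim$: $b_{\bk}(\bx)_{\bk}\sim 0$ means $b_{\bk}\prod_{i=1}^{r}(c_i-a_{i,0})\cdots(c_i-a_{i,k_i-1})\equiv 0\pmod{J}$ for every $(c_1,\ldots,c_r)\in D/I_1\times\cdots\times D/I_r$. Since the variables separate, this is equivalent to saying that $b_{\bk}$ annihilates (modulo $\bar J$) every product of the form $\prod_{i=1}^{r}\gamma_i$ where $\gamma_i$ ranges over the set $W_i=\{(c_i-a_{i,0})\cdots(c_i-a_{i,k_i-1}):c_i\in D/I_i\}\subseteq R$. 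In other words, $b_{\bk}\in\Ann_J\big(\langle W_1\rangle\cdots\langle W_r\rangle\big)$, where $\langle W_i\rangle$ is the ideal of $R$ generated by $W_i$.

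The crux, then, is to show $\langle W_i\rangle=v_{k_i}(D/I_i)$ for each $i$; once this is in hand the lemma follows by combining the $r$ equalities. The inclusion $\langle W_i\rangle\subseteq v_{k_i}(D/I_i)$ is already essentially contained in the proof of Lemma \ref{lemma2}: by \eqref{eq:wv} and the construction of the sequence $\{a_{i,j}\}_{j\in\N}$ (which agrees with the fixed $\bar P_l$-ordering $\{a_{l,i,j}\}$ modulo $\bar P_l^{e_l}$ for every $l$), each element of $W_i$ lies in $v_{k_i}(D/I_i,\bar P_l)$ for all $l$, hence in $v_{k_i}(D/I_i)=\prod_l v_{k_i}(D/I_i,\bar P_l)$ by \eqref{eq:vk}. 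For the reverse inclusion, the key point is \eqref{eq:vki}: the single product $(a_{i,k_i}-a_{i,0})\cdots(a_{i,k_i}-a_{i,k_i-1})$ — which is obtained by taking $c_i=a_{i,k_i}\in D/I_i$ — already generates $v_{k_i}(D/I_i)$. Since this generator is itself a member of $W_i$, we get $v_{k_i}(D/I_i)=\langle(a_{i,k_i}-a_{i,0})\cdots(a_{i,k_i}-a_{i,k_i-1})\rangle\subseteq\langle W_i\rangle$, and the two inclusions give equality.

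Finally I would assemble the pieces. From $\langle W_i\rangle=v_{k_i}(D/I_i)$ we get $\langle W_1\rangle\cdots\langle W_r\rangle=v_{k_1}(D/I_1)\cdots v_{k_r}(D/I_r)$, so the equivalence established in the first paragraph reads precisely $b_{\bk}(\bx)_{\bk}\sim 0\iff b_{\bk}\in\Ann_J\big(v_{k_1}(D/I_1)\cdots v_{k_r}(D/I_r)\big)$, which is the assertion of the lemma. The only mild subtlety to be careful about is the case where some $k_i=0$: then $W_i=\{1\}$, $\langle W_i\rangle=R=v_0(D/I_i)$, and the formula still holds, so no separate argument is needed. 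I expect the main (though still modest) obstacle to be the bookkeeping in the reverse inclusion — making sure that the single evaluation $c_i=a_{i,k_i}$ is legitimate (it is, since $a_{i,k_i}\in D/I_i$ by construction) and that \eqref{eq:vki} is being invoked with exactly the right indexing; everything else is a direct unwinding of definitions together with results already proved.
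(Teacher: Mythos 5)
Your proposal is correct and follows essentially the same route as the paper's proof: your reverse inclusion $v_{k_i}(D/I_i)\subseteq\langle W_i\rangle$ (via the generator obtained at $c_i=a_{i,k_i}$ and \eqref{eq:vki}) is exactly the paper's forward direction, which evaluates at $\ba=(a_{1,k_1},\ldots,a_{r,k_r})$, and your forward inclusion via \eqref{eq:wv} and \eqref{eq:vk} is exactly the paper's converse direction. Packaging both directions as the single ideal equality $\langle W_i\rangle=v_{k_i}(D/I_i)$ is a tidy reorganization but not a different argument.
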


\begin{proof}
First, suppose that $b_{\bk}(\bx)_{\bk}\sim 0$. Let $\ba=(a_{1,k_1},\ldots,a_{r,k_r})$.
Then, in particular, we have $b_{\bk}(\ba)_{\bk} \equiv 0  \pmod{J}$, that is
$$
b_{\bk}(\ba)_{\bk}=  b_{\bk}\prod_{i=1}^{r}(a_{i,k_i}-a_{i,0})\cdots (a_{i,k_i}-a_{i,k_i-1}) \in \bar{J}.
$$
Besides, by \eqref{eq:vki} we have
$$
v_{k_1}(D/I_1)\cdots v_{k_r}(D/I_r) = \langle \prod_{i=1}^{r}(a_{i,k_i}-a_{i,0})\cdots (a_{i,k_i}-a_{i,k_i-1}) \rangle.
$$
Hence, we obtain
$$
b_{\bk} \in \Ann_J(v_{k_1}(D/I_1)\cdots v_{k_r}(D/I_r)).
$$

Conversely, if
$$
b_{\bk} \in \Ann_J(v_{k_1}(D/I_1)\cdots v_{k_r}(D/I_r)),
$$
we have
$$
b_{\bk} v_{k_1}(D/I_1)\cdots v_{k_r}(D/I_r) \subseteq \bar{J},
$$
that is,
$$
b_{\bk}\prod_{i=1}^{r}(a_{i,k_i}-a_{i,0})\cdots (a_{i,k_i}-a_{i,k_i-1}) \in \bar{J}.
$$
Then, as before,  it follows from \eqref{eq:wv} and the construction of the sequences $\{a_{i,j}\}_{j\in \N}, i=1,2,\ldots,r$ that $b_{\bk}(\bx)_{\bk}\sim 0$.
\end{proof}

Now, we are ready to prove Theorem \ref{thm:rep}.

\begin{proof}[Proof of Theorem \ref{thm:rep}]
Let $G$ be an arbitrary polynomial representation of $f$.
By Lemmas \ref{lemma1} and \ref{lemma2}, we have
$$
G \sim \sum_{\bk \ge 0}b_{\bk}(\bx)_{\bk},
$$
where the sum is over the set of all $r$-tuples $\bk$ such that $k_i < \mu(I_i,J)$ for each $1\le i \le r$.
It follows directly from Lemmas \ref{lemma3} and \ref{lemma4} that the above coefficients $b_\bk \in R$ are uniquely determined modulo
$\Ann_J(v_{k_1}(D/I_1)\cdots v_{k_r}(D/I_r))$, as desired.

\end{proof}

Moreover, we can get a simpler canonical representation for a polynomial function from $D/I_1 \times \cdots \times D/I_r$ to $D/J$.
Each monomial $x_1^{k_1}\cdots x_r^{k_r}\in R[x_1,\ldots,x_r]$ corresponds
to an $r$-tuple $(k_1,\ldots,k_r)\in \N^r$.
For any $F \in R[x_1,\ldots,x_r]$, let $\Lm(F)$ be the $r$-tuple
corresponding to the leading monomial of $F$.

\begin{theorem} \label{thm:rep2}
Let $f$ be a polynomial function from $D/I_1 \times \cdots \times D/I_r$ to $D/J$, and denote $\bx^{\bk}=x_1^{k_1}\cdots x_r^{k_r}$. Then, $f$ can be represented by a polynomial of the form
$$
 F=\sum_{\bk \ge 0}b_{\bk}\bx^{\bk},
 $$
 where the coefficients $b_\bk \in R$ with $\bk=(k_1,\ldots,k_r) $ are uniquely determined modulo $\Ann_J(v_{k_1}(D/I_1)\cdots v_{k_r}(D/I_r))$,
 and the sum is over the set of all $r$-tuples $\bk$ such that $k_i < \mu(I_i,J)$ for each $1\le i \le r$.
\end{theorem}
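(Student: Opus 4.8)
The plan is to deduce this from Theorem \ref{thm:rep} by transporting it through the change of basis between the two $R$-bases $\{(\bx)_{\bk}\}_{\bk\in\N^r}$ and $\{\bx^{\bk}\}_{\bk\in\N^r}$ of $R[x_1,\ldots,x_r]$. First I would record the precise shape of this change of basis. Expanding $(\bx)_{\bk}=\prod_{i=1}^{r}\prod_{j=0}^{k_i-1}(x_i-a_{i,j})$ shows that
$$
(\bx)_{\bk}=\bx^{\bk}+\sum_{\bh}c_{\bk,\bh}\,\bx^{\bh},
$$
where $c_{\bk,\bh}\in R$ and the sum runs over the $\bh=(h_1,\ldots,h_r)$ with $h_i\le k_i$ for every $i$ and $\bh<\bk$; in particular every monomial occurring is $\le\bx^{\bk}$ in our ordering, with equality only at the leading term. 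Inverting this triangular, unipotent system by induction on the ordering gives a reverse expansion $\bx^{\bk}=(\bx)_{\bk}+\sum_{\bh}c'_{\bk,\bh}(\bx)_{\bh}$ with $c'_{\bk,\bh}\in R$, the sum again over $\bh$ with $h_i\le k_i$ for every $i$ and $\bh<\bk$. The reason to record this is that both substitutions preserve the support condition ``$k_i<\mu(I_i,J)$ for every $i$'', since $h_i\le k_i$ throughout.

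For existence I would take the representation $\sum_{\bk}b_{\bk}(\bx)_{\bk}$ of $f$ supplied by Theorem \ref{thm:rep} (the sum over $\bk$ with $k_i<\mu(I_i,J)$), substitute $(\bx)_{\bk}=\bx^{\bk}+\sum_{\bh}c_{\bk,\bh}\bx^{\bh}$, and collect the coefficient of each monomial; by the previous paragraph this yields a polynomial $F=\sum_{\bk}b'_{\bk}\bx^{\bk}$, equal to $\sum_{\bk}b_{\bk}(\bx)_{\bk}$ in $R[x_1,\ldots,x_r]$ and hence representing $f$, whose support still satisfies $k_i<\mu(I_i,J)$ for all $i$.

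For uniqueness, suppose $F=\sum_{\bk}b'_{\bk}\bx^{\bk}$ and $\widetilde F=\sum_{\bk}\widetilde b'_{\bk}\bx^{\bk}$ both represent $f$ with supports in $\{\bk:\,k_i<\mu(I_i,J)\}$, and set $G=F-\widetilde F=\sum_{\bk}d_{\bk}\bx^{\bk}\sim 0$ with $d_{\bk}=b'_{\bk}-\widetilde b'_{\bk}$. Rewriting $G$ in the basis $\{(\bx)_{\bk}\}$ via $\bx^{\bk}=(\bx)_{\bk}+\sum_{\bh}c'_{\bk,\bh}(\bx)_{\bh}$ gives $G=\sum_{\bk}D_{\bk}(\bx)_{\bk}$, where $D_{\bk}=d_{\bk}+\sum c'_{\bh,\bk}d_{\bh}$, the sum being over $\bh$ with $\bh>\bk$ and $h_i\ge k_i$ for all $i$, and this is still supported in $\{\bk:\,k_i<\mu(I_i,J)\}$. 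Since $G\sim 0$, Theorem \ref{thm:rep} applied to the zero function forces $D_{\bk}\in\Ann_J(v_{k_1}(D/I_1)\cdots v_{k_r}(D/I_r))$ for every $\bk$. From this I would extract the congruence class of each $d_{\bk}$ by descending induction on $\bk$ in the monomial ordering, beginning at the top of the support (where $D_{\bk}=d_{\bk}$) and using the displayed relation together with the memberships already established for the larger tuples, while exploiting that the factorial ideals are nested, $v_{k+1}(D/I_i)\subseteq v_k(D/I_i)$, which follows from \eqref{eq:wv}.

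I expect this last induction to be the main obstacle. Unlike the basis $\{(\bx)_{\bk}\}$, for which Lemmas \ref{lemma3} and \ref{lemma4} provide a clean term-by-term vanishing criterion, the monomial basis has no such criterion, so uniqueness must be read off through the triangular substitution; and there the nesting of the factorial ideals makes the annihilator $\Ann_J(v_{h_1}(D/I_1)\cdots v_{h_r}(D/I_r))$ attached to a mixed-in term \emph{coarser}, not finer, than $\Ann_J(v_{k_1}(D/I_1)\cdots v_{k_r}(D/I_r))$, so the induction has to be organised with this asymmetry in mind --- if necessary by first fixing, via Theorem \ref{thm:rep}, a system of coset representatives for the annihilators in the Newton form and only then passing to monomials, which in any case delivers the counting formula that the number of polynomial functions from $D/I_1\times\cdots\times D/I_r$ to $D/J$ equals $\prod_{\bk}|R/\Ann_J(v_{k_1}(D/I_1)\cdots v_{k_r}(D/I_r))|$. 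The remaining ingredients --- triangularity and unipotence of the change of basis, preservation of the support condition, and the transfer of existence --- are routine.
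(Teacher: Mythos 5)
Your existence argument coincides with the paper's: the paper likewise starts from the representation of Theorem \ref{thm:rep} and strips off leading monomials by induction on $\Lm$, which is exactly your triangular, unipotent change of basis, and both correctly note that the support condition $k_i<\mu(I_i,J)$ is preserved because the change of basis only mixes in tuples $\bh$ with $h_i\le k_i$. The important part of your proposal is the uniqueness discussion, and there the obstacle you isolated is not a failure of your induction --- the uniqueness assertion of Theorem \ref{thm:rep2}, read literally, is \emph{false}, and the paper's proof never addresses it (it proves only representability and silently carries the word ``uniquely'' over from Theorem \ref{thm:rep}). Take $D=\Z$, $r=1$, $I_1=J=4\Z$, so $R=\Z/4\Z$, $a_j=j$, $v_k(R)=\langle k!\rangle$, $\mu=4$. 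The polynomial $2x+2x^2=2x(x+1)$ represents the zero function on $\Z/4\Z$, yet its coefficient of $x$ is $2\notin\Ann_J(v_1(R))=\{0\}$; thus $0$ and $2x+2x^2$ are two admissible monomial representations of the same function whose degree-one coefficients are not congruent modulo the prescribed annihilator. (The converse direction fails as well: $2\in\Ann_J(v_2(R))$ but $2x^2\not\sim 0$.) This is exactly the coarsening phenomenon you describe: in $D_{\bk}=d_{\bk}+\sum c'_{\bh,\bk}d_{\bh}$ the mixed-in $d_{\bh}$ are controlled only modulo the larger ideals $\Ann_J(v_{h_1}(D/I_1)\cdots v_{h_r}(D/I_r))$, and no reorganisation of the descending induction can repair this.

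What is true --- and what the last paragraph of your proposal essentially suggests --- is the weaker statement that, after fixing for each $\bk$ a set of coset representatives of $R$ modulo $\Ann_J(v_{k_1}(D/I_1)\cdots v_{k_r}(D/I_r))$, every polynomial function admits exactly one monomial representation with each $b_{\bk}$ drawn from the chosen representatives. Existence follows by normalising the leading falling-factorial coefficient modulo its annihilator (using Lemma \ref{lemma4}) before subtracting $b_{\bj}\bx^{\bj}$ at each step of the induction; injectivity follows by descending induction on the ordering, since once $d_{\bh}=0$ for all $\bh>\bk$ one gets $D_{\bk}=d_{\bk}\in\Ann_J(v_{k_1}(D/I_1)\cdots v_{k_r}(D/I_r))$, which forces equality of representatives. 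That corrected version still yields the counting formula of Theorem \ref{thm:number}. In short: your existence half matches the paper; your refusal to assert the uniqueness half is justified, because the statement as written is wrong and the paper's own proof does not establish it.
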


\begin{proof}
From Theorem \ref{thm:rep}, we know that $f$ can be represented by a polynomial $G$ of the form
$$
G = \sum_{0 \le \bk \le \bj}b_{\bk}(\bx)_{\bk}, \quad b_\bj \ne 0,
$$
where the coefficients $b_\bk \in R$ with $\bk=(k_1,\ldots,k_r) $ are uniquely determined modulo $\Ann_J(v_{k_1}(D/I_1)\cdots v_{k_r}(D/I_r))$,
and for $\bj=(j_1,\ldots,j_r)$ we have  $j_i < \mu(I_i,J)$ for each $1\le i \le r$.
We prove the desired result by induction on $\bj=\Lm(G)$.
If $\bj=0$, we are done.
Otherwise, let $H=G-b_{\bj} \bx^{\bj}$. Then, $\Lm(H) < \bj$.
So, by the induction hypothesis, $H$ can be represented by a polynomial of the desired form,
and then so is $G$ (that is, $f$).
\end{proof}

\subsection{The number of polynomial functions}

From Theorem \ref{thm:rep}, we can obtain the following counting formula when $D$ has the \textit{finite norm property}
(that is, for any non-zero ideal $I$ of $D$, $D/I$ is a finite ring).
Notice that if $D$ has the finite norm property, then for each $1\le i \le r$ we have
$$
1 \le \mu(I_i,J) \le |D/I_i|.
$$

\begin{theorem}\label{thm:number}
Assume that $D$ has the finite norm property. Define the $r$-tuple
$$
\bmu = (\mu(I_1,J)-1, \ldots, \mu(I_r,J)-1) \in \N^r.
$$
Then, the number of polynomial functions from $D/I_1 \times \cdots \times D/I_r$ to $D/J$ is given by
$$
N(I_1,\ldots,I_r;J)=\prod_{0 \le \bk \le \bmu} | (v_{k_1}(D/I_1)\cdots v_{k_r}(D/I_r)+\bar{J})/\bar{J}|.
$$
\end{theorem}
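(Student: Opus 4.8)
The plan is to read the count off directly from the canonical representation in Theorem~\ref{thm:rep} and then to rewrite each local factor as an index of ideals. Throughout, write $W_{\bk} = v_{k_1}(D/I_1)\cdots v_{k_r}(D/I_r)$ for $\bk = (k_1,\ldots,k_r)\in\N^r$, and recall that $R = D/K$ is finite, since $D$ has the finite norm property and $K$ is a non-trivial ideal. First I would argue that Theorem~\ref{thm:rep} yields a bijection
$$
\Phi:\ \{\text{polynomial functions } D/I_1\times\cdots\times D/I_r \to D/J\}\ \longrightarrow\ \prod_{0\le \bk\le \bmu} R/\Ann_J(W_{\bk}),
$$
sending $f$ to the tuple $(b_{\bk}\bmod \Ann_J(W_{\bk}))$, where $\sum_{\bk} b_{\bk}(\bx)_{\bk}$ is any canonical representation of $f$ as in Theorem~\ref{thm:rep}. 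The uniqueness part of Theorem~\ref{thm:rep} makes $\Phi$ well defined (the tuple does not depend on the chosen representation) and injective, while surjectivity is immediate: for any prescribed coefficients $b_{\bk}\in R$ the polynomial $\sum_{\bk} b_{\bk}(\bx)_{\bk}$ represents some polynomial function, which $\Phi$ maps back to the prescribed tuple. (Here one uses Lemmas~\ref{lemma3} and~\ref{lemma4} exactly as in the proof of Theorem~\ref{thm:rep}: two choices of coefficients give the same function precisely when they agree modulo the annihilators.) Counting the target of $\Phi$ then gives
$$
N(I_1,\ldots,I_r;J) = \prod_{0\le \bk\le \bmu} \big|R/\Ann_J(W_{\bk})\big|.
$$

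It then remains to show $\big|R/\Ann_J(W_{\bk})\big| = \big|(W_{\bk}+\bar{J})/\bar{J}\big|$ for each $\bk$. The crucial input is that $W_{\bk}$ is a \emph{principal} ideal of $R$: by \eqref{eq:vki}, $W_{\bk}=\langle c\rangle$ with $c = \prod_{i=1}^{r}(a_{i,k_i}-a_{i,0})\cdots(a_{i,k_i}-a_{i,k_i-1})$. I would then consider the group homomorphism
$$
\psi:\ R \longrightarrow (W_{\bk}+\bar{J})/\bar{J},\qquad b\longmapsto bc+\bar{J}.
$$
It is surjective because every element of $W_{\bk}=cR$ has the form $bc$, and its kernel is $\{b\in R:\ bc\in\bar{J}\}$, which equals $\{b\in R:\ bW_{\bk}\subseteq\bar{J}\}=\Ann_J(W_{\bk})$ since $W_{\bk}=\langle c\rangle$ and $\bar{J}$ is an ideal. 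Hence $R/\Ann_J(W_{\bk})\cong (W_{\bk}+\bar{J})/\bar{J}$, so in particular these finite sets have equal cardinality; substituting this into the previous display gives the asserted formula.

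I do not expect a genuine obstacle here. The step that needs care is the cardinality identity of the second paragraph, which really does rely on the principality of $W_{\bk}$ — for an arbitrary ideal of an arbitrary finite ring the identity $|R/\Ann_J(I)| = |(I+\bar{J})/\bar{J}|$ can fail — so it is essential to invoke \eqref{eq:vki} (equivalently, the fact that $R=D/K$ is a finite principal ideal ring) rather than to attempt an argument valid for all ideals. The only other thing to watch is purely bookkeeping: one must set up $\Phi$ as a bijection between \emph{functions} and tuples, so that the right-hand product counts polynomial functions and not mere polynomial expressions, and this is exactly what the uniqueness statement in Theorem~\ref{thm:rep} provides.
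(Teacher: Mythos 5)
Your proposal is correct and follows essentially the same route as the paper: both reduce the count to $\prod_{\bk}|R/\Ann_J(W_{\bk})|$ via the uniqueness statement of Theorem~\ref{thm:rep}, and both identify $R/\Ann_J(W_{\bk})$ with $(W_{\bk}+\bar{J})/\bar{J}$ by exploiting that this quotient is a cyclic $R$-module. The only cosmetic difference is that you exhibit the generator $c$ from \eqref{eq:vki} and write down the surjection $b\mapsto bc+\bar J$ explicitly, whereas the paper invokes the general fact that $R$ is a principal ideal ring and applies $R/\Ann(M)\cong M$ for cyclic $M$.
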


\begin{proof}
Note that by Theorem \ref{thm:rep}, it suffices to prove that
$$
R/\Ann_J(v_{k_1}(D/I_1)\cdots v_{k_r}(D/I_r))  \cong (v_{k_1}(D/I_1)\cdots v_{k_r}(D/I_r)+\bar{J})/\bar{J}
$$
as $R$-modules.
First, by definition we have
$$
\Ann_J(v_{k_1}(D/I_1)\cdots v_{k_r}(D/I_r)) = \Ann((v_{k_1}(D/I_1)\cdots v_{k_r}(D/I_r)+\bar{J})/\bar{J}).
$$
Since $R$ is a non-trivial quotient of a Dedekind domain, $R$ is a principal ideal ring.
Then, as an $R$-module, $(v_{k_1}(D/I_1)\cdots v_{k_r}(D/I_r)+\bar{J})/\bar{J}$ is a cyclic $R$-module.
So, we directly have
$$
R/\Ann((v_{k_1}(D/I_1)\cdots v_{k_r}(D/I_r)+\bar{J})/\bar{J})  \cong (v_{k_1}(D/I_1)\cdots v_{k_r}(D/I_r)+\bar{J})/\bar{J}
$$
as $R$-modules. This in fact completes the proof.
\end{proof}

We point out that if $r=1$ and $K=J$, then we recover the counting formula in \cite[Theorem 5]{Bhargava1997}.

We emphasize again that by Theorem \ref{thm:number}, the number of such polynomial functions
does not depend on the choices of such complete sets $\cD_{I_i}$ of residues and the choices of such orderings.
Hence, in order to obtain more explicit formulas for some special cases, we can make suitable choices; see Theorem \ref{thm:number1} and Section \ref{sec:app}.

\begin{remark}
There are several kinds of Dedekind domains having the finite norm property (see also \cite{CL}):
(i) the ring of integers of an algebraic number field;
(ii) the ring of integers of an algebraic function field;
(iii) the ring of integers of a non-Archimedean local field.
\end{remark}

In Theorem \ref{thm:number}, if we further assume that $I_1=\cdots =I_r=J$, then we can obtain a more explicit formula.

\begin{theorem}\label{thm:number1}
Assume that $D$ has the finite norm property.
Let $J$ be a non-trivial ideal of $D$ with prime factorization $J=P_1^{e_1}\cdots P_n^{e_n}$.
Let $R=D/J$.
For each $1\le i \le n$, let $N_i=|D/P_i|$, and let $\bmu_i$ be the minimal $r$-tuple $\bk=(k_1,\ldots,k_r) \in \N^r$ such that
$v_{k_1}(R)\cdots v_{k_r}(R) \subseteq \bP_i^{e_i}$.
Then, the number of $r$-ary polynomial functions from $R \times \cdots \times R$ to $R$ is given by
$$
N(J)=\prod_{i=1}^{n} \prod_{0\le \bk < \bmu_i} N_i^{e_i-\sum_{l=1}^{r}\sum_{j\ge 1} \lfloor k_l/ N_i^j\rfloor}.
$$
\end{theorem}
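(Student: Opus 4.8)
The plan is to read off the formula from Theorem~\ref{thm:number} after evaluating the factorial ideals of $R=D/J$ inside its finite local factors. First I would specialize Theorem~\ref{thm:number} to the case $I_1=\cdots=I_r=J$: then $K=\lcm[J,\dots,J]=J$, so $R=D/K=D/J$ and the residue $\bar J$ of $J$ modulo $K$ is the zero ideal; hence the theorem reads $N(J)=\prod_{0\le\bk\le\bmu}|v_{k_1}(R)\cdots v_{k_r}(R)|$, where $\bmu=(\mu(J,J)-1,\dots,\mu(J,J)-1)$, the product runs over all $\bk=(k_1,\dots,k_r)\in\N^r$ with $k_l<\mu(J,J)$ for each $l$, and $|\cdot|$ denotes the number of elements of the ideal inside $R$. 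So it suffices to identify the ideals $v_k(R)$ and to count elements of their products.

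By the Chinese Remainder Theorem, $R\cong\prod_{i=1}^{n}D/P_i^{e_i}$, so the primes of $R$ are exactly $\bP_1,\dots,\bP_n$ and $v_k(R)=\prod_{i=1}^{n}v_k(R,\bP_i)$, with the $i$-th factor depending only on the component $D/P_i^{e_i}$. The crucial local fact is
\[
v_k\!\left(D/P_i^{e_i},\bP_i\right)=\bP_i^{\,\min(e_i,\;w_i(k))},\qquad w_i(k):=\sum_{j\ge 1}\lfloor k/N_i^{\,j}\rfloor .
\]
I would prove this using an explicit $\bP_i$-ordering: fix coset representatives $c_0=0,c_1,\dots,c_{N_i-1}$ of $P_i$ and elements $\pi_a\in P_i^{\,a}\setminus P_i^{\,a+1}$ for $a\ge 0$, and set $b_m\equiv\sum_a c_{d_a}\pi_a\pmod{P_i^{e_i}}$, where $m=\sum_a d_a N_i^{\,a}$ is the base-$N_i$ expansion of $m$. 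One checks that $b_0,b_1,\dots$ is a $\bP_i$-ordering of $D/P_i^{e_i}$ and that $(b_k-b_0)\cdots(b_k-b_{k-1})$ has $\bP_i$-valuation equal to $w_i(k)$, truncated at $e_i$ because $\bP_i^{e_i}=0$ there (cf.\ \eqref{eq:Pi}); since the associated $P$-sequence is independent of the $P$-ordering, this gives the formula. (Alternatively, one can invoke the computation of the factorial ideals of the discrete valuation ring $D_{P_i}$ from \cite{Bhargava1997} and reduce modulo $P_i^{e_i}$.) I expect this local computation to be the main obstacle; the remaining steps are bookkeeping with the Chinese Remainder Theorem.

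Assembling these, and using $\bP_i^{e_i}=\bP_i^{e_i+1}=\cdots$ in $R$ to truncate sums of exponents at $e_i$, one gets $v_{k_1}(R)\cdots v_{k_r}(R)=\prod_{i=1}^{n}\bP_i^{\,\min(e_i,\;\sum_{l=1}^{r}w_i(k_l))}$. Since the ideal $\bP_i^{\,t}$ of $D/P_i^{e_i}$ is, as an additive group, isomorphic to $D/P_i^{\,e_i-t}$, it has $N_i^{\,e_i-t}$ elements, so $|v_{k_1}(R)\cdots v_{k_r}(R)|=\prod_{i=1}^{n}N_i^{\,e_i-\min(e_i,\;\sum_l w_i(k_l))}$. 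Substituting this into the product of Theorem~\ref{thm:number} and interchanging the two products yields
\[
N(J)=\prod_{i=1}^{n}\ \prod_{0\le\bk\le\bmu}N_i^{\,e_i-\min(e_i,\;\sum_{l=1}^{r}w_i(k_l))} .
\]

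It then remains to simplify each inner product. Fix $i$. A tuple $\bk$ with $\sum_l w_i(k_l)\ge e_i$ — equivalently with $v_{k_1}(R)\cdots v_{k_r}(R)\subseteq\bP_i^{e_i}$, i.e.\ with $\bk$ at or beyond the threshold $\bmu_i$ — contributes the factor $N_i^{0}=1$ and may be deleted, whereas for the remaining $\bk$ (those with $\bk<\bmu_i$) the exponent equals $e_i-\sum_l w_i(k_l)$ with no truncation. Moreover every such $\bk$ lies automatically in the range $0\le\bk\le\bmu$ of the outer product: from $\sum_l w_i(k_l)<e_i$ we get $w_i(k_l)<e_i$ for each $l$, hence $k_l<\nu_i$, where $\nu_i$ is the smallest integer with $w_i(\nu_i)\ge e_i$; and since $v_k(R)=0$ exactly when $w_m(k)\ge e_m$ for every $m$, we have $\mu(J,J)=\max_m\nu_m\ge\nu_i$, so $k_l<\mu(J,J)$. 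Therefore each inner product equals $\prod_{0\le\bk<\bmu_i}N_i^{\,e_i-\sum_{l=1}^{r}\sum_{j\ge 1}\lfloor k_l/N_i^{\,j}\rfloor}$, and multiplying over $i=1,\dots,n$ gives the asserted formula for $N(J)$.
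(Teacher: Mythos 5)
Your proposal is correct and follows essentially the same route as the paper: specialize Theorem~\ref{thm:number} to $I_1=\cdots=I_r=J$ (so $K=J$ and $\bar J=0$), compute $v_k(R,\bP_i)=\bP_i^{\sum_{j\ge 1}\lfloor k/N_i^j\rfloor}$ via Bhargava's explicit base-$N_i$ $P$-ordering construction, assemble by the Chinese Remainder Theorem, and then interchange the products and discard the trivial factors. If anything, your write-up is more careful than the paper's at the final bookkeeping step (the truncation at $e_i$ via $\min$, and the verification that the nontrivial tuples for each $i$ all lie within the range $0\le\bk\le\bmu$), which the paper dispatches with only the remark that $\bP_i^{e_i+1}=\bP_i^{e_i}$.
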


\begin{proof}
Note that the prime ideals of $R$ are exactly $\bar{P}_1,\ldots, \bar{P}_n$.
As in proving \cite[Corollary 2]{Bhargava1997}, we now recall the construction in \cite[Example 3]{Bhargava1997}.
For each $\bar{P}_i$, $|R/\bar{P}_i| = |D/P_i|=N_i$, and let
$$
\{a_{i,0}=0,a_{i,1},\ldots, a_{i,N_i-1}\}
$$
be a complete set of residues modulo $\bar{P}_i$.
Pick an element $\pi_i \in \bar{P}_i \setminus \bar{P}_i^2$. For any $j\in \N$, write
$$
j= c_0 + c_1N_i + \cdots + c_h N_i^h, \quad 0 \le c_0,\ldots, c_h < N_i,
$$
and define
$$
a_{i,j} = a_{i,c_0} + a_{i,c_1} \pi_i + \cdots + a_{i,c_h} \pi_i^h.
$$
Then, $\{a_{i,j}\}_{j\in \N}$ is a $\bar{P}_i$-ordering of $R$, and the associated $\bar{P}_i$-sequence of $R$ for $k< |R|$ is given by
$$
v_k(R,\bP_i) = \bar{P}_i^{\sum_{j\ge 1} \lfloor k/ N_i^j\rfloor},
$$
and for any $k\ge |R|$, $v_k(R,\bP_i)$ is the zero ideal.
So, we have
$$
v_k(R) = \prod_{i=1}^{n} \bar{P}_i^{\sum_{j\ge 1} \lfloor k/ N_i^j\rfloor}.
$$

Thus, for any $r$-tuple $\bk=(k_1,\ldots,k_r)\in \N^r$, we obtain
$$
v_{k_1}(R)\cdots v_{k_r}(R) = \prod_{i=1}^{n} \bar{P}_i^{\sum_{l=1}^{r}\sum_{j\ge 1} \lfloor k_l/ N_i^j\rfloor}.
$$
Hence,
$$
|v_{k_1}(R)\cdots v_{k_r}(R)| = | \prod_{i=1}^{n} P_i^{\sum_{l=1}^{r}\sum_{j\ge 1} \lfloor k_l/ N_i^j\rfloor} / \prod_{i=1}^{n} P_i^{e_i} |.
$$
If $\sum_{l=1}^{r}\sum_{j\ge 1} \lfloor k_l/ N_i^j\rfloor \le e_i$ for each $1\le i \le n$, we have
\begin{equation}    \label{eq:vk1r}
|v_{k_1}(R)\cdots v_{k_r}(R)| = \prod_{i=1}^{n} N_i^{e_i-\sum_{l=1}^{r}\sum_{j\ge 1} \lfloor k_l/ N_i^j\rfloor}.
\end{equation}
When computing $|v_{k_1}(R)\cdots v_{k_r}(R)|$ for other cases, we only need to note that $\bP_i^{e_i+1} = \bP_i^{e_i}$ for each $1\le i \le n$.

Therefore, by Theorem \ref{thm:number} and \eqref{eq:vk1r}, the number of such polynomial functions is
$$
N(J) = \prod_{i=1}^{n} \prod_{0\le \bk < \bmu_i} N_i^{e_i-\sum_{l=1}^{r}\sum_{j\ge 1} \lfloor k_l/ N_i^j\rfloor}.
$$
\end{proof}

We remark that if $r=1$, then we recover the formula in \cite[Corollary 2]{Bhargava1997}.

\section{Applications}    \label{sec:app}

Here, we use our general results to study two special cases. One is $D=\Z$, and the other is $D=\F_q[t]$ (polynomial ring),
where $\F_q$ is the finite field of $q$ elements.

\subsection{Case of $\Z$}

As in the setting of \cite{Chen1996}, let $n_1, \ldots, n_r, m$ be positive integers.
Then, we consider polynomial functions from $\Z/n_1\Z \times \cdots \times \Z/n_r\Z$ to $\Z/m\Z$.
Using Theorems \ref{thm:rep} and \ref{thm:number}, we indeed can recover the main results in \cite{Chen1996}.

Let $a_k=k$ for each $k\in \N$. Then, the sequence $\{a_k\}_{k\in \N}$ is a $p$-ordering of $\Z$ for any prime $p$; see \cite[Example 2]{Bhargava1997}.
So, we can use this ordering simultaneously.
Particularly, now for each $\bk=(k_1,\ldots,k_r) \in \N^r$ we have
$$
(\bx)_\bk = \prod_{i=1}^{r} x_i(x_i-1)\cdots (x_i-k_i+1).
$$

Let $\lambda(m)$ be the smallest positive integer $k$ such that $m \mid k!$.
Since $(a_k-a_0)\cdots (a_k-a_{k-1})=k!$ and $|\Z/n_i\Z|=n_i$ for each $1\le i \le r$,  we have
$v_k(\Z/n_i\Z) = \langle k! \rangle$ for any $0\le k<n_i$ and $v_k(\Z/n_i\Z)=0$ for any $k\ge n_i$.
Thus, for each $1\le i \le r$ we obtain
$$
\mu(n_i\Z,m\Z) = \min(n_i, \lambda(m)),
$$
which is denoted by $\mu(n_i,m)$ for simplicity.

Then, for each $\bk=(k_1,\ldots, k_r) \in \N^r$ with $k_i < \mu(n_i,m)$ for each $1\le i \le r$, we have
$$
\Ann_{m\Z}(v_{k_1}(\Z/n_1\Z)\cdots v_{k_r}(\Z/n_r\Z)) = \langle \frac{m}{\gcd(m,\prod_{i=1}^{r}k_i!)} \rangle.
$$

Hence, using Theorems \ref{thm:rep} and \ref{thm:number}, we recover \cite[Theorems 1 and 2]{Chen1996}.

\begin{theorem}[Chen \cite{Chen1996}]
Let $f$ be a polynomial function from $\Z/n_1\Z \times \cdots \times \Z/n_r\Z$ to $\Z/m\Z$.
Then, $f$ can be uniquely represented by a polynomial
$$
F= \sum_{\bk} b_\bk (\bx)_\bk,
$$
where the coefficients $b_\bk$ are integers satisfying
$$
0 \le b_\bk < \frac{m}{\gcd(m,\prod_{i=1}^{r}k_i!)},
$$
and the summation is taken over all $\bk=(k_1,\ldots,k_r)\in \N^r$ with $0\le k_i < \mu(n_i,m)$ for each $1 \le i \le r$.
\end{theorem}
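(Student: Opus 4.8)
The plan is to read the statement off Theorem~\ref{thm:rep} applied to $D=\Z$, $I_i = n_i\Z$ and $J = m\Z$, after identifying the ambient ring and the relevant invariants. Put $K = \lcm(n_1,\dots,n_r,m)$ and $R = \Z/K\Z$, and use the $p$-ordering $a_k = k$ of $\Z$ (which is a $p$-ordering for every prime $p$) for all the sequences $\{a_{i,j}\}$ simultaneously, so that $(\bx)_\bk = \prod_{i=1}^{r} x_i(x_i-1)\cdots(x_i-k_i+1)$. By the computations recorded just before the statement, $v_k(\Z/n_i\Z) = \langle k!\rangle$ for $0\le k<n_i$ (and the zero ideal for $k\ge n_i$), $\mu(n_i\Z,m\Z) = \mu(n_i,m) = \min(n_i,\lambda(m))$, and for every $\bk=(k_1,\dots,k_r)$ with $k_i < \mu(n_i,m)$ one has
$$
\Ann_{m\Z}\bigl(v_{k_1}(\Z/n_1\Z)\cdots v_{k_r}(\Z/n_r\Z)\bigr) = \langle d_\bk\rangle, \qquad d_\bk := \frac{m}{\gcd(m,\prod_{i=1}^{r}k_i!)}.
$$
Hence Theorem~\ref{thm:rep} already gives a representation $f = \sum_\bk b_\bk(\bx)_\bk$, summed over exactly the $\bk$ with $k_i < \mu(n_i,m)$, in which each $b_\bk \in R$ is determined uniquely modulo the ideal $\langle d_\bk\rangle$ of $R$; it remains only to replace the $R$-valued coefficients by the canonical integers $0\le b_\bk < d_\bk$ and to check uniqueness.

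For this the key step is the ring isomorphism $R/\langle d_\bk\rangle \cong \Z/d_\bk\Z$. Since $d_\bk \mid m \mid K$, the full preimage in $\Z$ of the ideal $d_\bk R \subseteq R = \Z/K\Z$ is $d_\bk\Z + K\Z = d_\bk\Z$, so the natural surjection $\Z \to R/d_\bk R$ has kernel $d_\bk\Z$; consequently $\{0,1,\dots,d_\bk-1\}$, viewed in $R$, is a complete set of coset representatives of $d_\bk R$. I would then replace each coefficient $b_\bk$ produced by Theorem~\ref{thm:rep} by the unique integer in $[0,d_\bk)$ congruent to it modulo $\langle d_\bk\rangle$; by Lemma~\ref{lemma4} this changes each term $b_\bk(\bx)_\bk$ only by something $\sim 0$, so the new polynomial $F = \sum_\bk b_\bk(\bx)_\bk$ — which now lies in $\Z[x_1,\dots,x_r]$ because the $(\bx)_\bk$ do — still represents $f$, with $0\le b_\bk < d_\bk = m/\gcd(m,\prod_{i=1}^{r}k_i!)$ and $\bk$ ranging over $\{(k_1,\dots,k_r): k_i < \mu(n_i,m)\}$. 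Uniqueness is then immediate: if two such polynomials both represent $f$, then by the uniqueness clause of Theorem~\ref{thm:rep} (i.e.\ Lemmas~\ref{lemma3} and~\ref{lemma4}) their coefficients agree modulo $\langle d_\bk\rangle$ for every $\bk$, and two integers in $[0,d_\bk)$ that are congruent modulo $d_\bk$ are equal.

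There is no genuine obstacle; the only point that needs a word of care is the compatibility between polynomial functions represented by polynomials over $R$ (as in Theorems~\ref{thm:criterion} and~\ref{thm:rep}) and over $\Z$ (as in Definition~\ref{def:func1}): reducing a polynomial of $\Z[x_1,\dots,x_r]$ modulo $K$ and evaluating on $D/I_1\times\cdots\times D/I_r \subseteq R^r$ yields the same function into $D/J$ as evaluating over $\Z$ and reducing modulo $m$, so the two formulations coincide, and the integer representatives chosen above land $F$ in $\Z[x_1,\dots,x_r]$ on the nose. The companion counting statement (Chen's Theorem~2) falls out in the same way from Theorem~\ref{thm:number}, giving $\prod_\bk d_\bk$ polynomial functions.
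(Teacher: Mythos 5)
Your proposal is correct and follows essentially the same route as the paper: specialize Theorem \ref{thm:rep} to $D=\Z$ with the simultaneous $p$-ordering $a_k=k$, compute $v_k(\Z/n_i\Z)=\langle k!\rangle$, $\mu(n_i,m)=\min(n_i,\lambda(m))$, and the annihilator $\langle m/\gcd(m,\prod_i k_i!)\rangle$, then pick the canonical integer representatives. The extra details you supply (the isomorphism $R/\langle d_\bk\rangle\cong\Z/d_\bk\Z$ and the compatibility of evaluation over $R$ versus over $\Z$) are points the paper leaves implicit, and they are handled correctly.
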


\begin{theorem}[Chen \cite{Chen1996}]
The number of polynomial functions from $\Z/n_1\Z \times \cdots \times \Z/n_r\Z$ to $\Z/m\Z$ is given by
$$
N(n_1,\ldots,n_r;m)  = \prod_{\bk} \frac{m}{\gcd(m,\prod_{i=1}^{r}k_i!)},
$$
where the summation is taken over all $\bk=(k_1,\ldots,k_r)\in \N^r$ with $0\le k_i < \mu(n_i,m)$ for each $1 \le i \le r$.
\end{theorem}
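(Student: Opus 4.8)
The plan is to obtain the formula as a direct specialization of Theorem~\ref{thm:number}. Take $D=\Z$, $I_i=n_i\Z$, $J=m\Z$; since $\Z$ has the finite norm property, Theorem~\ref{thm:number} applies and gives
$$
N(n_1,\ldots,n_r;m)=\prod_{0\le \bk \le \bmu}\bigl|\bigl(v_{k_1}(\Z/n_1\Z)\cdots v_{k_r}(\Z/n_r\Z)+\bar{J}\bigr)/\bar{J}\bigr|,
$$
where $\bmu=(\mu(n_1,m)-1,\ldots,\mu(n_r,m)-1)$. By the definition of $\bmu$, the $r$-tuples $\bk$ occurring here are precisely those with $0\le k_i<\mu(n_i,m)$ for every $i$, matching the range in the stated theorem, so it remains only to identify each factor with $m/\gcd(m,\prod_{i=1}^{r}k_i!)$.

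For this I would use the facts assembled just above for the ordering $a_k=k$, which is a $p$-ordering of $\Z$ for every prime $p$ simultaneously by \cite[Example 2]{Bhargava1997} and satisfies $a_0=0$: one has $v_k(\Z/n_i\Z)=\langle k!\rangle$ for $0\le k<n_i$, whence for $\bk$ in the relevant range $v_{k_1}(\Z/n_1\Z)\cdots v_{k_r}(\Z/n_r\Z)=\langle \prod_{i=1}^{r}k_i!\rangle$ as an ideal of $R$. The proof of Theorem~\ref{thm:number} shows $\bigl(v_{k_1}(\Z/n_1\Z)\cdots v_{k_r}(\Z/n_r\Z)+\bar{J}\bigr)/\bar{J}\cong R/\Ann_J\bigl(v_{k_1}(\Z/n_1\Z)\cdots v_{k_r}(\Z/n_r\Z)\bigr)$ as $R$-modules, and the annihilator on the right was computed above to be $\langle m/\gcd(m,\prod_{i=1}^{r}k_i!)\rangle$. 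Since $m/\gcd(m,\prod k_i!)$ divides $m$, which in turn divides the modulus $K$ with $R=\Z/K\Z$, the quotient $R/\langle m/\gcd(m,\prod k_i!)\rangle$ has cardinality exactly $m/\gcd(m,\prod_{i=1}^{r}k_i!)$. (Alternatively one can compute the factor directly by pulling back to $\Z$: it equals $\bigl|\langle \gcd(\prod k_i!,m)\rangle/\langle m\rangle\bigr|=m/\gcd(m,\prod k_i!)$.)

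Substituting this value into the product yields
$$
N(n_1,\ldots,n_r;m)=\prod_{\bk}\frac{m}{\gcd(m,\prod_{i=1}^{r}k_i!)}
$$
over all $\bk=(k_1,\ldots,k_r)\in\N^r$ with $0\le k_i<\mu(n_i,m)$, as claimed. There is no genuine obstacle here, since the work is done by Theorems~\ref{thm:rep} and~\ref{thm:number}; the one point warranting a moment's care is that using the single ordering $a_k=k$ for every prime is consistent with the Chinese Remainder Theorem construction of the sequences $\{a_{i,j}\}$ in Section~2 — but because all of the $n_i\Z$ are handled with the identical ordering, that construction collapses and one may simply take $a_{i,j}=j$, which is exactly what makes \eqref{eq:vki} reduce to $v_k(\Z/n_i\Z)=\langle k!\rangle$.
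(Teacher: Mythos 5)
Your proposal is correct and follows essentially the same route as the paper: the paper likewise derives this as a direct specialization of Theorem~\ref{thm:number}, using the simultaneous $p$-ordering $a_k=k$, the identification $v_k(\Z/n_i\Z)=\langle k!\rangle$, and the annihilator computation $\Ann_{m\Z}(v_{k_1}(\Z/n_1\Z)\cdots v_{k_r}(\Z/n_r\Z))=\langle m/\gcd(m,\prod_{i=1}^{r}k_i!)\rangle$. Your extra step of explicitly verifying that the factor $|(v_{k_1}\cdots v_{k_r}+\bar J)/\bar J|$ equals $m/\gcd(m,\prod_{i=1}^{r}k_i!)$ is exactly the detail the paper leaves implicit.
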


\subsection{Case of $\F_q[t]$}

Denote $A=\F_q[t]$.
Let $f_1, \ldots, f_r, g$ be non-constant polynomials in $A$.
We consider polynomial functions from $A/f_1A \times \cdots \times A/f_rA$ to $A/gA$.

We write $\F_q=\{a_{0}=0,a_{1},\ldots,a_{q-1}\}$. For every $k\in \N$, write
$$
k= c_0 + c_1q + \cdots + c_h q^h, \quad 0 \le c_0,\ldots, c_h < q,
$$
and define
$$
a_{k}=a_{c_{0}}+a_{c_{1}}t+\ldots+a_{c_{h}}t^{h}.
$$
As indicated in \cite[Section 10]{Bhargava2000} (see also the example in page 289 of \cite{Adam}),
 the sequence $\{a_k\}_{k\in \N}$ is a $P$-ordering of $A$ for any prime ideal $P$ of $A$.
So, we use this ordering simultaneously.
Particularly, now for each $\bk=(k_1,\ldots,k_r) \in \N^r$ we have
$$
(\bx)_\bk = \prod_{i=1}^{r} (x_i-a_0)(x_i-a_1)\cdots (x_i-a_{k_i-1}).
$$
When $r=1$, this can define factorials for $A$, as an analogue of factorials of the rational integers;
see \cite{LS} for another analogue.

Let $\lambda(g)$ be the smallest positive integer $k$ such that
$$
g \mid \prod_{j=0}^{k-1}(a_k-a_j).
$$
Since $|A/f_iA|=q^{\deg f_i}$, for each $1\le i \le r$, we have
$$
v_k(A/f_iA) = \langle \prod_{j=0}^{k-1}(a_k-a_j) \rangle
$$
 for any $0\le k<q^{\deg f_i}$, and $v_k(A/f_iA)=0$ for any $k\ge q^{\deg f_i}$.
Thus, for each $1\le i \le r$ we obtain
$$
\mu(f_iA,gA) = \min(q^{\deg f_i}, \lambda(g)),
$$
which is denoted by $\mu(f_i,g)$ for simplicity.

Then, for each $\bk=(k_1,\ldots, k_r) \in \N^r$ with $k_i < \mu(f_i,g)$ for any $1 \le i \le r$, we have
$$
\Ann_{gA}(v_{k_1}(A/f_1A)\cdots v_{k_r}(A/f_rA)) = \langle \frac{g}{\gcd(g,\prod_{i=1}^{r}\prod_{j=0}^{k_i-1}(a_{k_i}-a_j))} \rangle.
$$

Hence, using Theorems \ref{thm:rep} and \ref{thm:number}, we obtain the following two results.

\begin{theorem}
Let $f$ be a polynomial function from $A/f_1A \times \cdots \times A/f_rA$ to $A/gA$. Then, $f$ can be uniquely represented by a polynomial
$$
 F=\sum_{\bk }b_{\bk}(\bx)_{\bk},
 $$
 where the coefficients $b_\bk \in A$ satisfy
 $$
 \ b_\bk=0 \text{\ \ \ or\ \ } \deg b_{\bk}< \deg\frac{g}{\gcd(g,\prod_{i=1}^{r}\prod_{j=0}^{k_i-1}(a_{k_i}-a_j))},
 $$
 and the summation is taken over all $\bk=(k_1,\ldots,k_r)\in \N^r$ with $0\le k_i < \mu(f_i,g)$ for each $1 \le i \le r$.
\end{theorem}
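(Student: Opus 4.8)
The plan is to read off the statement from Theorem~\ref{thm:rep} after specializing $D=A=\F_q[t]$, $I_i=f_iA$ and $J=gA$, and then to replace ``unique modulo an ideal'' by ``unique with a degree bound'' via the division algorithm in $A$. Almost all of the required data has already been assembled in the paragraphs above: because $\{a_k\}_{k\in\N}$ is a $P$-ordering of $A$ simultaneously for every prime ideal $P$, it serves as the sequence $\{a_{i,j}\}$ for every $i$ and every prime of $R$ at once, so $(\bx)_\bk=\prod_{i=1}^r(x_i-a_0)\cdots(x_i-a_{k_i-1})$, and $v_k(A/f_iA)=\langle\prod_{j=0}^{k-1}(a_k-a_j)\rangle$ for $0\le k<q^{\deg f_i}$ and is the zero ideal otherwise, whence $\mu(f_iA,gA)=\mu(f_i,g)$. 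Feeding this into Theorem~\ref{thm:rep} gives a representation $F=\sum_\bk b_\bk(\bx)_\bk$ with the sum over all $\bk$ with $0\le k_i<\mu(f_i,g)$ and with $b_\bk$ uniquely determined modulo $\Ann_{gA}(v_{k_1}(A/f_1A)\cdots v_{k_r}(A/f_rA))$.

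The only remaining step is to describe this annihilator concretely and to choose canonical representatives. Set $h_\bk=\prod_{i=1}^r\prod_{j=0}^{k_i-1}(a_{k_i}-a_j)\in A$, so that $v_{k_1}(A/f_1A)\cdots v_{k_r}(A/f_rA)=\langle h_\bk\rangle$; since $A$ is a principal ideal domain the ideal quotient $(gA:h_\bk A)$ equals $\langle p_\bk\rangle$ with $p_\bk:=g/\gcd(g,h_\bk)$, which is exactly the computation displayed before the theorem. Pulling this back from $R=A/\lcm[f_1A,\ldots,f_rA,gA]$ to $A$ (note $p_\bk\mid g$, so $\langle p_\bk\rangle$ contains $gA$ and hence the modulus $K$), one finds that $b_\bk$, viewed as an element of $A$, is well defined modulo $\langle p_\bk\rangle$. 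The division algorithm then provides, in each residue class modulo $\langle p_\bk\rangle$, a unique representative that is either $0$ or of degree $<\deg p_\bk$. Choosing this representative for every $\bk$ yields a representation of the claimed shape, and uniqueness is inherited directly from the uniqueness-modulo-annihilator assertion of Theorem~\ref{thm:rep}.

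I expect the one point requiring care to be the passage between $R$ and $A$: checking that the preimage in $A$ of $\Ann_{gA}(\langle h_\bk\rangle)$ is precisely $\langle p_\bk\rangle$ and that it contains $K$, so that ``determined modulo the annihilator in $R$'' and ``determined modulo $\langle p_\bk\rangle$ in $A$'' really say the same thing about the lifted coefficient. It is also worth recording one genuine feature of the summation range rather than a triviality: for $\bk$ with all $k_i<\mu(f_i,g)$ it can still happen that $g\mid h_\bk$ (the individual factorial ideals being too small to lie in $gA$ while their product is not), in which case $p_\bk$ is a unit, $\deg p_\bk=0$, and $b_\bk=0$ is forced; this is precisely why the conclusion is phrased as ``$b_\bk=0$ or $\deg b_\bk<\deg(g/\gcd(g,\prod_{i=1}^r\prod_{j=0}^{k_i-1}(a_{k_i}-a_j)))$'' rather than only as a degree inequality.
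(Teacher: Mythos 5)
Your proposal is correct and follows essentially the same route as the paper: the paper states this theorem as a direct consequence of Theorem~\ref{thm:rep} together with the computations carried out just before it (the simultaneous $P$-ordering $\{a_k\}$, the identification $v_k(A/f_iA)=\langle\prod_{j=0}^{k-1}(a_k-a_j)\rangle$, the value of $\mu(f_i,g)$, and the annihilator $\langle g/\gcd(g,\prod_{i}\prod_{j}(a_{k_i}-a_j))\rangle$), with the degree-bounded representatives supplied by the division algorithm exactly as you describe. Your additional checks --- that the preimage in $A$ of the annihilator computed in $R$ is $\langle g/\gcd(g,h_{\bk})\rangle$ because $K\subseteq gA$, and the remark explaining the ``$b_{\bk}=0$ or'' phrasing --- are correct and only make explicit what the paper leaves implicit.
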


\begin{theorem}   \label{thm:poly}
The number of polynomial functions from $A/f_1A \times \cdots \times A/f_rA$ to $A/gA$ is given by
$$
N(f_1,\ldots,f_r;g) = \prod_{\bk}q^{\deg \frac{g}{\gcd(g,\prod_{i=1}^{r}\prod_{j=0}^{k_i-1}(a_{k_i}-a_j))} },
$$
and the summation is taken over all $\bk=(k_1,\ldots,k_r)\in \N^r$ with $0\le k_i < \mu(f_i,g)$ for each $1 \le i \le r$.
\end{theorem}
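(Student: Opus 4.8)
The plan is to obtain Theorem~\ref{thm:poly} as a direct specialization of the general counting formula, Theorem~\ref{thm:number}, to the Dedekind domain $D = A = \F_q[t]$, which has the finite norm property; this mirrors the treatment of the case $D=\Z$ carried out just above. All the explicit data needed — the simultaneous $P$-ordering $\{a_k\}_{k\in\N}$, the factorial ideals $v_k(A/f_iA)$, the quantities $\mu(f_i,g)$, and the annihilators $\Ann_{gA}(\cdots)$ — have already been assembled in the paragraphs preceding the statement, so the proof amounts to combining them.

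Concretely, I would first invoke Theorem~\ref{thm:number} with $I_i = f_iA$ and $J = gA$, which gives
$$
N(f_1,\ldots,f_r;g) = \prod_{0 \le \bk \le \bmu} |(v_{k_1}(A/f_1A)\cdots v_{k_r}(A/f_rA) + \bar{J})/\bar{J}|,
$$
where $\bmu = (\mu(f_1,g)-1,\ldots,\mu(f_r,g)-1)$, so that the index set is exactly $\{\bk \in \N^r : 0 \le k_i < \mu(f_i,g)\}$ as in the statement. Next, exactly as in the proof of Theorem~\ref{thm:number}, each factor is the cardinality of the cyclic $R$-module $R/\Ann_{gA}(v_{k_1}(A/f_1A)\cdots v_{k_r}(A/f_rA))$. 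Substituting the already-derived identity for this annihilator and writing $d_\bk = \gcd(g, \prod_{i=1}^{r}\prod_{j=0}^{k_i-1}(a_{k_i}-a_j))$, this module is $R/\langle g/d_\bk\rangle \cong A/(g/d_\bk)A$, which has $q^{\deg(g/d_\bk)}$ elements because $|A/hA| = q^{\deg h}$ for every nonzero $h \in A$. Multiplying these cardinalities over the index set then yields the claimed product formula.

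The substantive input — the validity of the canonical representation and of the factorial-ideal machinery — is entirely contained in Theorems~\ref{thm:rep} and~\ref{thm:number}, so the only things I would check here are bookkeeping: that $\{a_k\}_{k\in\N}$ is indeed a $P$-ordering of $A$ for every prime $P$ (the cited fact from \cite[Section~10]{Bhargava2000}, see also \cite{Adam}), that the first $q^{\deg f_i}$ terms $a_0,\ldots,a_{q^{\deg f_i}-1}$ form a complete set of residues modulo $f_iA$ so that $v_k(A/f_iA) = 0$ for $k \ge q^{\deg f_i}$, and that $d_\bk \mid g$ so that $g/d_\bk$ makes sense. I do not anticipate any real obstacle; the only place to be careful is transcribing the index ranges and the $\gcd$ in the denominator correctly from the annihilator computation, and recording (via the remark after Theorem~\ref{thm:number}) that the answer is independent of the choice of ordering, which is exactly what licenses using this particular convenient $\{a_k\}$.
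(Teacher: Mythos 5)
Your proposal is correct and follows essentially the same route as the paper, which likewise obtains this theorem by specializing Theorem~\ref{thm:number} to $A=\F_q[t]$ using the simultaneous ordering, the computed $\mu(f_i,g)$, and the annihilator identity established in the preceding paragraphs. The identification of each factor with $|A/(g/d_{\bk})A| = q^{\deg(g/d_{\bk})}$ is exactly the intended bookkeeping.
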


We remark that Theorem~\ref{thm:poly} plays a key role in studying
 congruence preserving functions in the residue class rings of polynomials over finite fields in \cite{LS2},
 which is an analogue of the integer case \cite{Bhargava1997-1}.

\section*{Acknowledgments}

The research of the first author was supported by National Science Foundation of China Grant No. 11526119
and Scientific Research Foundation of Qufu Normal University No. BSQD20130139.
The research of the second author was supported by a Macquarie University Research Fellowship.

\end{document}